\documentclass[a4paper,11pt]{amsart}
\usepackage[margin=0.94in]{geometry}
\usepackage[utf8]{inputenc}
\usepackage{amsfonts,amssymb,amsmath,amstext,amscd,epsfig}
\usepackage{amsthm,cases}
\usepackage{graphicx}
\usepackage{stackengine}
\usepackage[colorlinks=true,pdfborder={0 0 0}]{hyperref}
\hypersetup{urlcolor=blue,citecolor=teal,linkcolor=purple}
\numberwithin{equation}{section}
\newtheorem{lemma}{Lemma}[section]
\newtheorem{theorem}[lemma]{Theorem}

\newtheorem{proposition}[lemma]{Proposition}
\newtheorem{remark}{Remark}
\usepackage{tikz}

\stackMath
\newcommand{\R}{\mathbb{R}}

\newcommand{\Z}{\mathbb{Z}}
\newcommand{\T}{\mathbb{T}}
\newcommand{\G}{\mathbb{G}}

\newcommand{\N}{\mathbb{N}}
\newcommand{\sX}{\mathsf{X}}

\newcommand{\dd}{\, {\rm d}}
\newcommand{\dt}{\frac{{\rm d}}{{\rm d}t}}
\newcommand{\ds}{\displaystyle}
\newcommand{\cC}{\mathcal{C}}
\newcommand{\cR}{\mathcal{R}}
\newcommand{\cL}{\mathcal{L}}
\newcommand{\cE}{\mathcal{E}}
\newcommand{\cM}{\mathcal{M}}
\newcommand{\cO}{\mathcal{O}}
\newcommand{\fB}{\mathfrak{B}}
\newcommand{\Lip}{\operatorname{Lip}}
\newcommand{\var}{\varepsilon}
\renewcommand{\Re}{\operatorname{Re}}

\setcounter{tocdepth}{1}
\setcounter{secnumdepth}{4}


\title[A consistency-stability approach to scaling limits\dots]{A
  consistency-stability approach to scaling limits of zero-range processes}

\author{Daniel Marahrens}
\address{WorldQuant, London, UK}
\email{daniel.marahrens@gmail.com}

\author{Angeliki Menegaki}
\address{Department of Mathematics, Imperial College London, UK}
\email{a.menegaki@imperial.ac.uk}

\author{Clément Mouhot}
 \address{DPMMS, University of Cambridge, UK}
\email{c.mouhot@dpmms.cam.ac.uk}

\begin{document}
\maketitle

\begin{abstract}
  We propose a simple quantitative method for studying the hydrodynamic limit of interacting particle systems on lattices. It is applied to the diffusive scaling of the symmetric Zero-Range Process (in dimensions one and two). The rate of convergence is estimated in a Monge-Kantorovich distance asymptotic to the $L^1$ stability estimate of Krüzkhov, as well as in relative entropy; and it is uniform in time. The method avoids the use of the so-called ``block estimates''. It is based on a modulated Monge-Kantorovich distance estimate and microscopic stability properties.
\end{abstract}

\tableofcontents

\section{Introduction}

We consider the hydrodynamic limit of interacting particle systems on a lattice. The general problem is to show that under an appropriate scaling of time and space, the local particle density of a stochastic lattice gas converges to the solution to a macroscopic partial differential equation. We present the method at an abstract level and then we apply it to a prototypical concrete model, the so-called Zero-Range Process (ZRP). 

\subsection{State of the art}

Hydrodynamic limits are known at a qualitative level under both hyperbolic and parabolic scalings for several stochastic models, including the simple-exclusion processes, exclusion processes with degenerate rates, the zero-range processes, as well as spin models such as the Ginzburg-Landau process with convex potential or with Kawasaki dynamics, see~\cite{GPV88,Yau91,Reza,KL99,PatriciaLandim09} and references threrein. These proofs rely on entropy estimates, either with respect to the global invariant mesure as in \textit{entropy methods}, or with respect to the local Gibbs measure as in \textit{relative entropy methods} introduced for such models by~\cite{Yau91}. However finding quantitative error estimates has been an area of active research for several decades, and has remained opened in most cases. Such quantitative error estimates are also connected to the related issue of the long-time behaviour of this hydrodynamic limit.

Let us discuss the existing results of quantitative hydrodynamic limit. For the $d$-dimensional simple exclusion process with weak asymmetry, for all $d\geq 1$, the optimal rate was obtained in~\cite{JM18}, together with the fluctuation limit, by refining the relative entropy method and without relying on logarithmic Sobolev inequalities. This approach has not been applied to truly nonlinear process, i.e. with nonlinear jump rates, such as the zero-range process, to our knowledge. Quantitative fluctuations were recently obtained in~\cite{BenVitalii24} for the symmetric simple exclusion process, with the optimal convergence rate. We also note the recent results on quantitative hydrodynamical limits via homogenisation techniques for non-gradient interacting particle systems, first in \cite{Giunti:2022aa} and more recently in \cite{FGW24}. Regarding spin processes, the article~\cite{YauH-1} considered the $1$-dimensional convex Ginzburg-Landau process and obtained the optimal rate, via a modulated $H^{-1}$ estimate. In the non-convex case, the only quantitative result so far seems to be the nice recent paper~\cite{DMOW18a} which obtains a quantitative polynomial rate in dimension one by building upon previous ideas of~\cite{GOVW09}: it relies on establishing a uniform logarithmic Sobolev inequalities and the introduction of an intermediate mesoscopic; it seems to be able to capture the optimal convergence rate. 

The relative entropy method in~\cite{Yau91} can be made quantitative, provided that the \textit{block estimates} it relies on are also made quantitative: it is feasible and interesting in its own right, and we will present it in a separate forthcoming work~\cite{MMM:sequel}. Note that all these works assume and rely on the regularity of the limit solution. This excludes lattice models who limit solution present interesting degenerate solutions, such as shocks, for conservation laws, or compactly supported profiles, for porous medium equation. Our goal is to apply the new method presented here to the question of dealing with such degenerate limit solutions in the future.

In this article, we consider reversible processes and the diffusive (parabolic) scaling. We propose a systematic short method for proving the hydrodynamic limit for symmetric and attractive processes  with quantitative rate, in weak distance and in relative entropy. Our method relies on a consistency-stability approach, based on, roughly, three key properties: macroscopic and microscopic stability estimates in appropriate topologies, and consistency estimates comparing the microscopic and the macroscopic semigroups. This approach thus differs in spirit with the typical techniques and remarkably, it avoids the use of the block estimates (see~\cite[Chapter~5]{KL99}). We establish microscopic stability estimates reminiscent of Kruzhkov-type stability estimates on the limit equation and inspired from~\cite{Reza}. We also derive higher-regularity microscopic stability estimates, referred to as "jump distance estimates", during the proof of the consistency error. We take advantage of the relaxation of the macroscopic dynamics to prove that our estimates are uniform in time, under parabolic scaling. 
We use a Poincar\'{e} inequality in the consistency estimate, and we do not require the validity of a logarithmic Sobolev inequality. 
To our knowledge, this is the first quantitative result for the Zero-Range Process when it is truly nonlinear, i.e. with nonlinear jump rate. The hydrodynamic limit for degenerate nonlinear diffusion equation (see \cite{PatriciaLandim09} and references therein) is currently under investigation where our method seems to yield first promising quantitative results.

\subsection{Acknowledgements} The authors thank Thierry Bodineau, Max Fathi, Pierre Le Bris, Jean-Christope Mourrat, Stefano Olla, Felix Otto and Jeremy Quastel for enlightening discussions or for suggesting references. The second author acknowledges partial support from a Huawei fellowship at Institut des Hautes Études Scientifiques (IHES) and a Chapman fellowship at Imperial College London. The third author acknowledges partial funding from the ERC (under project MAFRAN). Both the second and third authors gratefully acknowledge the support of IHES where part of this work was completed.

\section{The general method}
\label{sec:intro}

\subsection{Set up and notation}

We denote by $\sX \subset \R$ the state space at a given site
(number of particles, spin, etc.). 
Consider the discrete
torus $\T_N^d := \{ 0,1, \dots, N-1 \}^d$ and the corresponding
phase space of particle configurations $\sX_N:=\sX^{\T^d
  _N}$. Variables $x,y,z \in \T^d_N$ are called
\emph{microscopic}, whereas variables in the continuous unit
torus $u, v, w \in \T^d$ are called \emph{macroscopic}; finally
particle \emph{configurations} are denoted by
$\eta, \zeta \in \sX_N$. We embed $\T_N^d$ into $\T^d$ by the map
$x \mapsto N^{-1}x$, which means that the distance between sites
of the lattice scales like $N^{-1}$. Given a particle
configuration $\eta = (\eta_x)_{x \in \T^d_N} \in \sX_N$, we
define the \emph{empirical measure}
\begin{equation}
  \label{eq:emp_meas_dirac}
  \alpha_\eta^N := N^{-d} \sum_{x\in\T^d_N} \eta_x
  \delta_{\frac{x}{N}} \in \mathcal{M}_+(\T^d)
\end{equation}
where $\mathcal{M}_+(\T^d)$ is the space of positive Radon
measures on the torus.

At the microscopic level, the interacting particle system evolves
through a stochastic process and the time-dependent probability
measure describing the law of $\eta$ is denoted by
$\mu_t^N \in P(\sX_N)$, where $P(\sX_N)$ denotes the space of
probability measures on $\sX_N$. We consider a \emph{linear}
operator $\cL_N : C_b(\sX_N) \rightarrow C_b(\sX_N)$ generating a
unique Feller semigroup $e^{t \cL_N}$ on $P(\sX_N)$
(see~\cite[Chapter 1]{Liggett1985} for the definitions) so that
given $\mu_0^N \in P(\sX_N)$ the law
$\mu_t ^N = e^{t \cL_N ^\dag} \mu^N_0 \in P(\sX_N)$, with
$\cL_N ^\dag$ the adjoint acting on $P(\sX_N)$, satisfies
\begin{equation}
  \label{eq:particle_evol}
  \forall \, \Phi \in C_b(\sX_N), \quad
  \dt\langle \Phi, \mu^N_t \rangle = \langle \cL_N \Phi, \mu^N_t
  \rangle,
\end{equation}
where $C_b(\sX_N)$ denotes continuous bounded real-valued
functions over $\sX_N$, and $\langle \cdot, \cdot \rangle$
denotes the duality bracket between $C_b(\sX_N)$ and
$P(\sX_N)$. 

At the macroscopic level, we consider a map (possibly depending
on $N$)
\begin{equation*}
  \mathbf{L}_\infty  : \cM_+(\T^d) \rightarrow \cM_+(\T^d),
\end{equation*}
and in general \emph{unbounded} and \emph{nonlinear}, and the
evolution problem on $f(t,u)=f_t(u)$:
\begin{equation}
  \label{eq:limitPDE1} 
  \partial_t f_t  = \mathbf{L}_\infty   f_t , \quad f_{t =0}=f_0.
\end{equation}

A measure $\mu^N_\infty \in P(\sX_N)$ is called \emph{invariant}
for~\eqref{eq:particle_evol} if
\begin{equation*}
  \forall \, \Phi \in C_b(\sX_N), \quad
  \big\langle \mathcal{L}_N \Phi, \mu^N _\infty \big\rangle = 0.
\end{equation*}
We consider a class of models where the equilibrium has the
following \emph{product structure}. For any
$\lambda \in \text{Conv}(\sX)$, where
$\text{Conv}(\sX) \subset \R$ is the convex hull of $\sX$, there
is a one-site marginal probability measure 
\begin{equation*}
    n_\lambda: \text{Conv}(\sX) \to \R_+ \quad \text{ and }
    \quad \sigma : \text{Conv}(\sX) \to \text{Conv}(\sX)
\end{equation*}
so that for all $\lambda \in \text{Conv}(\sX)$
\begin{equation}
  \label{eq:inv-meas-gen}
  \mu^N_{\infty,\lambda} := n_\lambda ^{\otimes \T^d_N}, \quad
  \text{i.e.} \quad \mu^N_{\infty,\lambda}(\eta) = \prod_{x \in
    \T^d_N} n_\lambda(\eta_x),
\end{equation}
is an invariant measure on $\sX_N$, and for any
$\rho \in \text{Conv}(\sX)$,
$\mathbb E_{n_{\sigma(\rho)}}[\eta_x] = \rho$. We then define,
given a macroscopic profile $f : \T^d \to \text{Conv}(\sX)$, the
\emph{local equilibrium measure}
\begin{equation}
  \label{eq:gibbs-gen}
  \vartheta^N_f(\eta) := \prod_{x \in \G_N}
  n_{\sigma\left(f_t\left(\frac{x}{N}\right)\right)}(\eta_x).
\end{equation}
At the macroscopic level, the counterpart of
$\mu^N_{\infty,\lambda}$ are the constant functions
$f \equiv \sigma^{-1}(\lambda)$, which are stationary solutions
to~\eqref{eq:limitPDE1}. Any equilibrium measure allows the
define the adjoint $\cL_N^*$ in
$L^2({\rm d}\vartheta_{f_\infty})$, acting on probability
densities of the form
$G^N_f := {\rm d} \vartheta_{f}^N/{\rm d} \vartheta_{f_\infty}$.

The two maps $\eta \mapsto \alpha_\eta ^N$ and
$f \mapsto \vartheta^N_f$ connect the functional spaces of the
microscopic and macroscopic evolutions, as summarized in
Figure~\ref{fig:setting}. The diagram only closes in the limit
$N \to \infty$ when the empirical measure becomes asymptotically
atomic, i.e. when correlations remain low and \emph{molecular
  chaos} holds.
\begin{figure}[!ht]
  \includegraphics[scale=0.9]{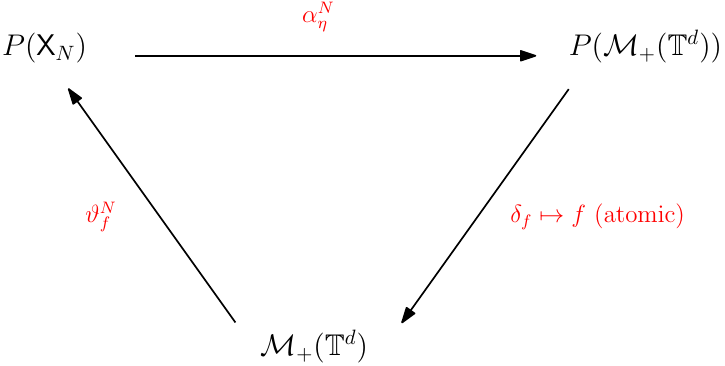}
  \caption{The functional setting. The downward right arrow
    simply embeds the atomic elements
    $\delta_f \in P(\mathcal M_+(\T^d))$ for some
    $f \in \mathcal M_+(\T^d)$ into $\mathcal M_+(\T^d)$ by the
    mapping $\delta_f \mapsto f$.}
  \label{fig:setting}
\end{figure}

In order to compare the two dynamics, we also define the adjoint
of $\cL_\infty$ the operator accounting for the time evolution of
$G^N_t := {\rm d} \vartheta_{f_t}^N/{\rm d}
\vartheta_{f_\infty}$, where $f_t$ evolves through
$\mathbf{L}_\infty $:
\begin{equation}
  \label{eq:LinfN}
  \cL_{\infty} ^* G^N_f := \left( \sum_{x \in \T^d_N} \frac{\sigma'\left(f\left(\frac{x}{N}\right)\right) \mathbf{L}_\infty f \left(\frac{x}{N}\right)}{n_{\sigma\left(f   \left(\frac{x}{N}\right)\right)}(\eta_x)} \Big[ \partial_\lambda n_\lambda(\eta_x) \Big]_{\big|    \lambda=\sigma\left(f\left(\frac{x}{N}\right)\right)} \right) G_f ^N.
\end{equation}

We denote by $\Lip(\sX_N)$ the set of Lipschitz functions
$\Phi : \sX_N \to \R$ with respect to the normalised $\ell^1$
norm, and we denote the corresponding semi-norm
\begin{equation*}
  [\Phi]_{\Lip(\sX_N)} := \inf_{\zeta, \eta \in \sX_N}
  \frac{|\Phi(\eta)- \Phi(\zeta)|}{\frac{1}{N^d} \sum_{x\in
    \T_N^d} |\eta_x-\zeta_x|}.
\end{equation*}
By duality we obtain the following Monge-Kantorovich distance over
$P(\sX_N)$
\begin{equation*}
  \forall \, \mu^N_1, \mu^N_2 \in P(\sX_N), \quad
  \left\| \mu^N_1 - \mu^N_2 \right\|_{\Lip^*} = \sup_{\Phi \in
    \Lip(\sX_N)^*} \frac{\left\langle
    \mu^N_1 - \mu^N_2, \Phi \right\rangle}{[\Phi]_{\Lip(\sX_N)}}.
\end{equation*}
where $\Lip(\sX_N)^*$ is the set of non-constant Lipschitz
functions.

Finally given $\mu, \nu \in P(X_N)$ we define
\begin{equation}
  \label{eq:rel_entropy}
  H^N(\mu | \nu) := \frac{1}{N^d} \int_{\sX_N}
  \log\big(\textstyle\frac{{\rm d}\mu}{{\rm d}\nu}\big) \dd \mu
  \in \R_+ \cap \{+\infty\}
\end{equation}
when $\mu$ has a Radon-Nikodym derivative with respect to $\nu$,
or simply $+\infty$ otherwise.

\subsection{Abstract assumptions}

We make the following assumptions
on~\eqref{eq:particle_evol}-\eqref{eq:limitPDE1}:
\medskip

\noindent
\textbf{(H1) Microscopic stability.} The microscopic evolution
semigroup $e^{t\mathcal{L}_N}$ satisfies
\begin{align}
  \label{eq:micro-stab}
  \forall \, \Phi \in \Lip(\sX_N), \ \forall \, t \ge 0, \quad
  \left[ e^{t\mathcal{L}_N}
  \Phi \right]_{\Lip(\sX_N) } \leq  C_S \left[  \Phi
  \right]_{\Lip(\sX_N)}
\end{align}
for some stability constant $C_S \ge 1$ (independent of time).
\medskip

\noindent
\textbf{(H2) Macroscopic stability.} There is a Banach space
$\fB \subset \cM_+(\G_\infty)$ so that~\eqref{eq:limitPDE1} is
locally well-posed in $\fB$. Moreover, given an initial condition
$f_0 \in \fB$, we write $f_t \in \fB$ the unique solution on its
maximal time interval of existence $[0,T_m)$, $f_\infty$ the
stationary constant solution with the same mass as $f_0$,
$G^N_t := {\rm d} \vartheta_{f_t^N}^N/{\rm d}
\vartheta_{f_\infty}$, and 
\begin{align}
  \label{eq:Rpar}
  & \forall \, t \in [0,T_m), \quad
    \mathsf R_1(t) := \left\| f_t - f_{\infty} \right\|_\fB \\
  \label{eq:lip-test}
  & \forall \, t \in [0,T_m), \quad
    \mathsf R_2(t) := \left[ \frac{\ln G^N_t}{N^d}
    \right]_{\Lip(\sX_N)} + \left[ \frac{\left[
    \cL_N+\cL_N^* \right] \sqrt{G^N_t}}{N^d
    \sqrt{G^N_t}} \right]_{\Lip(\sX_N)}. 
\end{align}
\medskip

\noindent
\textbf{(H3) Monge-Kantorovich Consistency.} There is a consistency
error $\epsilon_L(N) \ge 0$ so that, given any initial condition
$f_0 \in \fB$, on the maximal time of existence $t \in [0,T_m)$
of its corresponding solution $f_t \in \fB$, one has for any
$\Phi \in \Lip(\sX_N)$ with $[\Phi]_{\Lip(\sX_N)} \le 1$:
\begin{equation}
  \label{eq:consist}
  \int_0 ^t \int_{\sX_N} \left( e^{(t-\tau)\cL_N}
    \Phi \right) \left[ \cL_N^* G^N _\tau - \cL_\infty ^*
    G^N_\tau \right] \dd \vartheta^N_{f_\infty} \dd \tau
  \le \epsilon_L(N)  \int_0 ^t \mathsf R_1(\tau) \dd \tau.
\end{equation}

 \medskip

\noindent
\textbf{(H4) Entropic Consistency.} There is a consistency error
$\epsilon_{NL}(N) \ge 0$ so that, given any initial condition
$f_0 \in \fB$, on the maximal time of existence $t \in [0,T_m)$
of its corresponding solution $f_t \in \fB$, one has
\begin{align}
  \nonumber
  H^N\left( \vartheta_{f_0^N}^N | \vartheta^N _{f_\infty} \right) -
  H^N\left( \vartheta_{f_t^N}^N| \vartheta^N _{f_\infty} \right) 
  + \frac{2}{N^d} \int_0 ^t \int_{\sX_N}
  \sqrt{G^N_\tau}  \cL_N \left( \sqrt{G^N_\tau} \right) \dd
  \vartheta^N _{f_\infty} \dd \tau \\
  \label{eq:consist-ent}
  \le \epsilon_{NL}(N) \int_0 ^t \mathsf R_1(\tau) \dd \tau.
\end{align}

\subsection{The abstract approach}

\begin{theorem}[The abstract result]
  \label{thm:main}
  Consider an interacting particle
  system~\eqref{eq:particle_evol}-\eqref{eq:limitPDE1} with a
  product invariant measure as described, and satisfying {\bf
    (H1)-(H2)-(H3)}. Let $\mu_0^N \in P_1(X_N)$ for $N\geq 1$ and
  $f_0 ^N \in \mathcal{B}$. Let $f_t ^N \in \fB$ the unique
  solution to~\eqref{eq:limitPDE1} on the time of existence
  $[0,T_m)$ in~{\bf (H2)}. Given $T \in [0,T_m)$, one has
  \begin{align}
    \label{eq:cvg}
    & \sup_{t \in [0,T]} \left\| \mu_t ^N - \vartheta_{f_t ^N}^N
      \right\|_{\Lip^*} \le C_S \left\| \mu_0 ^N-
      \vartheta_{f_0 ^N} ^N \right\|_{\Lip^*} +  \epsilon_L(N) \int_0^T
      \mathsf R_1(t) \dd t.
  \end{align}

  If the interacting particle system additionally satisfies {\bf
    (H4)} then
  \begin{align}
    \label{eq:cvg-ent}
    & \sup_{t \in [0,T]} H^N \left(\mu_t ^N \vert \vartheta_{f_t ^N}^N
      \right) \le H^N \left( \mu_0 ^N \vert \vartheta_{f_0 ^N} ^N \right)
    \\
    \nonumber
    & \hspace{1cm} + \epsilon_L(N) \mathsf R_2(0) + 2 
      \epsilon_L(N) \left( \sup_{t \in [0,T]} \mathsf R_2(t)
      \right) \int_0^T \mathsf R_1(t) \dd t  + \epsilon_{NL}(N)
      \int_0^T \mathsf R_1(t) \dd t.
  \end{align}
\end{theorem}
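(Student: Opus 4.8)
The plan is to prove both estimates by the same device: a Duhamel (variation-of-constants) identity comparing the microscopic semigroup $e^{t\cL_N}$ with the curve of local equilibria $\vartheta^N_{f^N_t}$, closed by the structural hypotheses.

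\textit{Estimate \eqref{eq:cvg}.} Fix $T\in[0,T_m)$ and $\Phi\in\Lip(\sX_N)$ with $[\Phi]_{\Lip(\sX_N)}\le1$, and set $D(\tau):=\langle\mu^N_\tau-\vartheta^N_{f^N_\tau},\,e^{(T-\tau)\cL_N}\Phi\rangle$ for $\tau\in[0,T]$. Since $\mu^N_\tau=e^{\tau\cL_N^\dag}\mu^N_0$ solves \eqref{eq:particle_evol}, the contribution $\langle\mu^N_\tau,e^{(T-\tau)\cL_N}\Phi\rangle=\langle\mu^N_0,e^{T\cL_N}\Phi\rangle$ is constant in $\tau$, so $D$ varies only through $\vartheta^N_{f^N_\tau}$; differentiating $\langle\vartheta^N_{f^N_\tau},e^{(T-\tau)\cL_N}\Phi\rangle=\int_{\sX_N}(e^{(T-\tau)\cL_N}\Phi)\,G^N_\tau\dd\vartheta^N_{f_\infty}$, using $\partial_\tau G^N_\tau=\cL_\infty^* G^N_\tau$ (which is precisely the definition \eqref{eq:LinfN}) and the $L^2(\vartheta^N_{f_\infty})$-adjointness of $\cL_N^*$, gives $\partial_\tau D(\tau)=\int_{\sX_N}(e^{(T-\tau)\cL_N}\Phi)[\cL_N^*G^N_\tau-\cL_\infty^*G^N_\tau]\dd\vartheta^N_{f_\infty}$. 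Integrating over $[0,T]$, one has $D(T)=\langle\mu^N_T-\vartheta^N_{f^N_T},\Phi\rangle$ and $D(0)=\langle\mu^N_0-\vartheta^N_{f^N_0},e^{T\cL_N}\Phi\rangle$; by \textbf{(H1)} (which gives $[e^{T\cL_N}\Phi]_{\Lip(\sX_N)}\le C_S$) the boundary term is $\le C_S\|\mu^N_0-\vartheta^N_{f^N_0}\|_{\Lip^*}$, and by \textbf{(H3)} the time-integral is $\le\epsilon_L(N)\int_0^T\mathsf R_1$. Taking the supremum over admissible $\Phi$ and over $T'\le T$ (using $\mathsf R_1\ge0$) gives \eqref{eq:cvg}.

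\textit{Estimate \eqref{eq:cvg-ent}.} I would start from the exact identity, valid since $\vartheta^N_{f_\infty}$ is invariant, $H^N(\mu^N_t\,|\,\vartheta^N_{f^N_t})=H^N(\mu^N_t\,|\,\vartheta^N_{f_\infty})-N^{-d}\langle\ln G^N_t,\mu^N_t\rangle$, and its counterpart at $t=0$. The relative entropy with respect to the invariant measure is nonincreasing along the (reversible) microscopic dynamics, so $H^N(\mu^N_t\,|\,\vartheta^N_{f_\infty})\le H^N(\mu^N_0\,|\,\vartheta^N_{f_\infty})$; combined with the two identities this gives $H^N(\mu^N_t\,|\,\vartheta^N_{f^N_t})\le H^N(\mu^N_0\,|\,\vartheta^N_{f^N_0})-N^{-d}\int_0^t\langle R_s,\mu^N_s\rangle\dd s$, where $R_s:=\partial_s\ln G^N_s+\cL_N\ln G^N_s$ is the ``material derivative'' of $\ln G^N_s$ along the microscopic flow. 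I then split $\mu^N_s=\vartheta^N_{f^N_s}+(\mu^N_s-\vartheta^N_{f^N_s})$. The local-equilibrium part of $-N^{-d}\int_0^t\langle R_s,\mu^N_s\rangle\dd s$ is $-N^{-d}\int_0^t\int(\cL_N\ln G^N_s)G^N_s\dd\vartheta^N_{f_\infty}\dd s$ (the $\partial_s\ln G^N_s$-contribution integrating to zero against $\vartheta^N_{f^N_s}$ by mass conservation); rewriting $\cL_N\ln G^N_s$ via the carré-du-champ identity $\cL_N(\phi^2)=2\phi\,\cL_N\phi+\Gamma_N(\phi)$ with $\phi=\sqrt{G^N_s}$ converts this into a macroscopic-entropy difference plus a term of Dirichlet-form type, which \textbf{(H4)} bounds (after absorbing the residual Dirichlet form) by $\epsilon_{NL}(N)\int_0^T\mathsf R_1$. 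The fluctuation part $-N^{-d}\int_0^t\langle R_s,\mu^N_s-\vartheta^N_{f^N_s}\rangle\dd s$ is estimated by running the Duhamel identity of the first part with test function $R_s$: one first checks, again via the carré-du-champ identity, that $[R_s]_{\Lip(\sX_N)}\lesssim N^d\mathsf R_2(s)$ --- this is exactly where both terms of $\mathsf R_2$ in \eqref{eq:lip-test} enter, and where the factor $2$ originates --- and then \textbf{(H3)} bounds the time-integral term by $\epsilon_L(N)\mathsf R_2(s)\int_0^s\mathsf R_1$ and \textbf{(H1)} the boundary term, which together produce $\epsilon_L(N)\mathsf R_2(0)+2\epsilon_L(N)(\sup_{[0,T]}\mathsf R_2)\int_0^T\mathsf R_1$. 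Taking $\sup_{[0,T]}$ gives \eqref{eq:cvg-ent}.

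\textit{Main obstacle.} The first estimate is essentially formal once the identity is in place; all the work is in the second. One must track several Dirichlet-form (entropy-production) terms --- from the dissipation of $\mu^N_t$ relative to $\vartheta^N_{f_\infty}$, from \textbf{(H4)}, and from the carré-du-champ remainders in the comparisons among $\ln G^N_t$, $\cL_N^*G^N_t/G^N_t$ and $[\cL_N+\cL_N^*]\sqrt{G^N_t}/\sqrt{G^N_t}$ --- and show they combine with the correct signs and constants so that no uncontrolled positive Dirichlet form survives. One must also verify the convexity inequalities for $x\ln x$ at the level of the jump generator, the bound $[R_s]_{\Lip(\sX_N)}\lesssim N^d\mathsf R_2(s)$, and enough regularity to differentiate $\tau\mapsto\langle e^{(T-\tau)\cL_N}\Phi,\mu^N_\tau-\vartheta^N_{f^N_\tau}\rangle$; and, most delicately, one must arrange that the boundary terms of the entropy Duhamel collapse into $\epsilon_L(N)\mathsf R_2(0)$ rather than reproducing the Monge--Kantorovich distance of the initial data, which the relative entropy does not control.
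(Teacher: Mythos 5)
Your argument for \eqref{eq:cvg} is correct and is essentially the paper's: differentiating $\tau\mapsto\langle\mu^N_\tau-\vartheta^N_{f^N_\tau},e^{(T-\tau)\cL_N}\Phi\rangle$ reproduces, in dual form, exactly the Duhamel identity the paper writes for the densities $F^N_t-G^N_t$, and \textbf{(H1)}, \textbf{(H3)} close it in the same way.

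The entropy estimate, however, has a genuine gap, located exactly where you write ``after absorbing the residual Dirichlet form''. By invoking $H^N(\mu^N_t|\vartheta^N_{f_\infty})\le H^N(\mu^N_0|\vartheta^N_{f_\infty})$ at the outset you discard the microscopic entropy production $\frac{1}{N^d}\int_0^t\int_{\sX_N} F^N_s\,\cL_N\ln F^N_s\,\dd\vartheta^N_{f_\infty}\dd s$, which is a \emph{negative quantity of order one}. You are then left with the local-equilibrium term $-N^{-d}\int_0^t\int_{\sX_N}(\cL_N\ln G^N_s)\,G^N_s\,\dd\vartheta^N_{f_\infty}\dd s$, which is a \emph{positive quantity of order one}: for the ZRP a Taylor expansion shows it converges to $\int_0^t\int_{\T^d}\mathtt A:\nabla\sigma(f_s)^{\otimes2}/\sigma(f_s)\,\dd u\,\dd s$, the macroscopic entropy dissipation, which does not vanish as $N\to\infty$. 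Nothing left in your budget can absorb it: the fluctuation term is $o(1)$ by design, and \textbf{(H4)} is a one-sided inequality pointing the \emph{wrong} way for your purpose --- rearranged, it gives a lower bound on $-\frac{2}{N^d}\int_0^t\int\sqrt{G^N_\tau}\,\cL_N\sqrt{G^N_\tau}\,\dd\vartheta^N_{f_\infty}\dd\tau$, not an upper bound. The carr\'e-du-champ identity does not rescue this either: the relation between $\int G\,\cL_N\ln G$ and $\int\sqrt G\,\cL_N\sqrt G$ is the one-directional inequality $(\ln b-\ln a)a\le2(\sqrt b-\sqrt a)\sqrt a$, again in the unhelpful direction.

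The missing idea is that the two $O(1)$ quantities --- the entropy production of $F^N$ and the Dirichlet form of $G^N$ --- must be played off against each other rather than estimated separately. The paper keeps the entropy production of $\mu^N_t$, uses \textbf{(H4)} to trade $-H^N(\vartheta^N_{f_t}|\vartheta^N_{f_\infty})$ for $-H^N(\vartheta^N_{f_0}|\vartheta^N_{f_\infty})$ at the price of the positive Dirichlet form of $G^N$, and then cancels the latter against the former via the chain $-2\int\sqrt G\,\cL_N\sqrt G=-\int\frac{[\cL_N+\cL_N^*]\sqrt G}{\sqrt G}\,G\le-\int\frac{[\cL_N+\cL_N^*]\sqrt G}{\sqrt G}\,F+\mathrm{err}\le-2\int\sqrt F\,\cL_N\sqrt F\le-\int F\,\cL_N\ln F$. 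The first inequality is an application of \eqref{eq:cvg} with the test function whose Lipschitz seminorm is the second term of $\mathsf R_2$ (this, together with the application of \eqref{eq:cvg} to $N^{-d}\ln G^N_t$, is where the factor $2$ and the two terms of $\mathsf R_2$ actually come from --- not from a single test function $R_s$, whose Lipschitz control is not what \eqref{eq:lip-test} provides); the middle inequality is the crux and is proved by a positivity/AM--GM argument on the symmetrized semigroup $e^{t[\cL_N+\cL_N^*]}$, not by any carr\'e-du-champ manipulation. Without this comparison between the Dirichlet forms of $G^N$ and $F^N$, the argument cannot close. (Your closing worry about the initial boundary term is legitimate, but secondary to this.)
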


\begin{proof}[Proof of Theorem~\ref{thm:main}]
  The proof is short and we present it immediately in order to
  provide the reader with a clear picture of the strategy. The
  rest of the paper is devoted to proving the assumptions on our
  particular model. Denote by
  $F^N_t := {\rm d} \mu^N_t/{\rm d}\vartheta^N_{f_\infty}$ and
  $G^N_t := {\rm d} \vartheta^N_{f_t ^N}/{\rm
    d}\vartheta^N_{f_\infty}$ the densities with respect to the
  equilibrium measure $\vartheta_{f_\infty}^N$.  
  
In order to prove \eqref{eq:cvg}, we write
\begin{align*}
  \partial_t \Big( F^N_t - G_t ^N \Big)
  & = \mathcal{L}_N^* \Big( F^N_t - G_t ^N \Big) + \left(
    \mathcal{L}_N ^* G_t ^N - \partial_t G_t ^N \right) \\
  & = \mathcal{L}_N^* \Big( F^N_t - G_t ^N \Big) + \left(
    \mathcal{L}_N ^* G_t ^N - \mathcal{L}_\infty^* G_t ^N \right)
\end{align*}
so that Duhamel's formula yields
\begin{align*}
  F^N_t - G_t ^N = e^{t \mathcal{L}_N ^*} \left( F^N_0 - G_0 ^N
  \right) + \int_0^t e^{(t-\tau)\mathcal{L}_N ^*} \left( \mathcal{L}_N ^*
  G_\tau ^N - \mathcal{L}_\infty^* G_\tau ^N \right) \dd \tau.
\end{align*}
Take $\Phi \in \Lip(\sX_N)$ with
$[ \Phi ]_{\operatorname{Lip}(\sX_N)} \leq 1$ and integrate the
above equation to get
\begin{align*}
  & \int_{\sX_N} \Phi  \Big( F^N_t - G_t ^N \Big) \dd   \vartheta^N_{f_\infty} \\
  & = \underbrace{\int_{\sX_N} \left( e^{t \mathcal{L}_N} \Phi \right) \Big( F^N_0 - G_0 ^N \Big) \dd \vartheta^N_{f_\infty}}_{I_1(t)} + \underbrace{\int_{\sX_N} \int_0^t \left( e^{(t-\tau) \mathcal{L}_N} \Phi  \right) \left( \mathcal{L}_N G_\tau ^N - \mathcal{L}_\infty G_\tau  ^N \right) \dd \vartheta^N_{f_\infty} \dd \tau}_{I_2(t)}.
\end{align*}
The assumption \textbf{(H1)} then implies
$I_1(t) \le C_S \| \mu^N_0 - \vartheta_{f_0} ^N \|_{\Lip^*}$ and
the assumption \textbf{(H3)} implies
$I_2(t) \le \epsilon_L(N) \int_0 ^t R(\tau) \dd \tau$ which
proves~\eqref{eq:cvg}. 

To prove the second
inequality~\eqref{eq:cvg-ent}, i.e. convergence in relative entropy, we write
\begin{align*}
  H^N \left(\mu_t ^N \vert \vartheta_{f_t ^N}^N \right)
  & = H^N \left(\mu_t ^N \vert \vartheta_{f_\infty}^N \right) -
     \int_{\sX_N} \frac{\ln G^N_t}{N^d} F^N_t \dd \vartheta_{f_\infty} ^N.
\end{align*}
We then use~\eqref{eq:cvg}, i.e. the hydrodynamic limit in dual Lipschitz distance, to estimate the second term of the right-hand side. The Lipschitz function needed for our purposes is given by $\frac{\ln G^N_t}{N^d}$ and the fact that this is indeed Lipschitz is shown in Section \ref{sec:entropy}. This yields
\begin{align*}
  - \int_{\sX_N} \frac{\ln G^N_t}{N^d} \dd \mu^N_t \le
  - \int_{\sX_N} \frac{\ln G^N_t}{N^d} G^N_t \dd
  \vartheta_{f_\infty} ^N + \epsilon_L(N) \mathsf R_2(t) \int_0
  ^t \mathsf R_1(\tau) \dd \tau.
\end{align*}
We deduce
\begin{align}
  \label{eq:ent1}
  H^N \left(\mu_t ^N \vert \vartheta_{f_t ^N}^N \right)
  & \le H^N \left(\mu_t ^N \vert \vartheta_{f_\infty}^N \right) -
    H^N \left(\vartheta_{f_t ^N} ^N \vert \vartheta_{f_\infty}^N
    \right) + \epsilon_L(N) \mathsf R_2(t) \int_0 ^t
    \mathsf R_1(\tau) \dd \tau.
\end{align}
We now connect the entropies at time $t$ on the right-hand side
to the initial entropies using respectively the evolution
equation of the particle system and the entropic
consistency \textbf{(H4)}, ~\eqref{eq:consist-ent}:
\begin{align}
  \label{eq:ent2}
  & H^N \left(\mu_t ^N \vert \vartheta_{f_\infty}^N \right) = H^N
  \left(\mu_0 ^N \vert \vartheta_{f_\infty}^N \right) + \frac{1}{N^d} \int_0 ^t
  \int_{\sX_N} F^N_\tau \cL_N \left( \ln F^N_\tau \right) \dd
    \vartheta_{f_\infty} ^N \dd\tau \\
  \label{eq:ent3}
  & - H^N \left(\vartheta_{f_t^N} ^N \vert \vartheta_{f_\infty}^N
    \right) \le - H^N\left( \vartheta_{f_0}^N \vert \vartheta^N
    _{f_\infty} \right) - \frac{2}{N^d} \int_0 ^t \int_{\sX_N}
    \sqrt{G^N_\tau}  \cL_N \left( \sqrt{G^N_\tau} \right) \dd
    \vartheta_{f_\infty} ^N \dd \tau \\
  \nonumber 
  & \hspace{7cm} + \epsilon_{NL}(N) \int_0 ^t \mathsf
    R_1(\tau) \dd \tau.
\end{align}
We then note: 
\begin{enumerate} 
\item Whenever $\cL_N$ is the
generator of a Markov process, we have that 
\begin{align*}
  -  2 \int_0 ^t \int_{\sX_N}
  \sqrt{F^N_\tau}  \cL_N \left( \sqrt{F^N_\tau} \right) \dd
  \vartheta_{f_\infty} ^N \dd \tau \le - \int_0 ^t \int_{\sX_N}
  F^N_\tau \cL_N \ln F^N_\tau \dd\vartheta_{f_\infty} ^N \dd \tau, 
\end{align*} 
\item and also
\begin{align}
  \label{eq:cont-var-ent}
  - \int_0 ^t \int_{\sX_N} \frac{\left[\cL_N + \cL_N^*\right]\left(
  \sqrt{G^N_\tau}  \right)}{\sqrt{G^N_\tau}} F^N_\tau \dd
  \vartheta_{f_\infty} ^N \dd \tau \le - 2 \int_0 ^t \int_{\sX_N}
  \sqrt{F^N_\tau}  \cL_N \left(\sqrt{F^N_\tau} \right) \dd
  \vartheta_{f_\infty} ^N \dd \tau.
\end{align}
\end{enumerate} 
The point (1) follows from the inequality 
$(\ln b - \ln a) a \le 2 (\sqrt{b}-\sqrt{a})\sqrt{a}$, that holds for all
$a,b >0$. 
The point (2), is seen as follows: 
If $P_t := \exp(t [\cL_N+\cL_N^*])$,  then the transition
kernel of this semigroup satisfies
$p_t(\eta,\zeta)=p_t(\zeta,\eta)$ and for any $F,G > 0$ (the
non-negative case follows by approximation)
\begin{align*}
  &  \int_{\sX_N} \frac{P_t \left( \sqrt{G}
    \right)}{\sqrt{G}} F \dd \vartheta_{f_\infty} ^N \\
  & = \frac12 \int_{\sX_N \times \sX_N} p_t(\eta,\zeta) \left( 
    \frac{\sqrt{G(\zeta)F(\eta)}}{\sqrt{G(\eta)F(\zeta)}}
    + \frac{\sqrt{G(\eta)F(\zeta)}}{\sqrt{G(\zeta)F(\eta)}} \right)
    \sqrt{F(\eta)} \sqrt{F(\zeta)}
    \dd \vartheta_{f_\infty}^N (\eta) \dd \vartheta_{f_\infty} ^N (\zeta)
  \\
  & \ge \int_{\sX_N \times \sX_N} p_t(\eta,\zeta) 
    \sqrt{F(\eta)} \sqrt{F(\zeta)}
    \dd \vartheta_{f_\infty} ^N (\eta) \dd \vartheta_{f_\infty}
    ^N (\zeta) = \int_{\sX_N} \left( P_t\sqrt{F} \right)
    \sqrt{F}\dd \vartheta_{f_\infty} ^N.
\end{align*}
Since both quantities agree at $t=0$ by differentiating at
$t=0$ we get
\begin{align*}
  \int_{\sX_N} \sqrt{F^N_\tau}
  \left[ \cL_N + \cL_N^* \right] \left(\sqrt{F^N_\tau} \right)
  \dd \vartheta_{f_\infty} ^N \le \int_{\sX_N} \frac{\left[\cL_N +
  \cL_N^*\right]\left( \sqrt{G^N_\tau} \right)}{\sqrt{G^N_\tau}}
  F^N_\tau \dd \vartheta_{f_\infty} ^N.
\end{align*}
This implies indeed~\eqref{eq:cont-var-ent} using the symmetry of the
left-hand side. 

We now apply the hydrodynamic limit in the dual-Lipschitz 
norm, in \eqref{eq:cvg}, to the left-hand side
of \eqref{eq:cont-var-ent}.  
The Lipschitz function needed for our purposes is 
$$(G^N_\tau)^{-1/2} \left[ \cL_N + \cL_N^* \right] \left(
  \sqrt{G^N_\tau} \right)$$ (using the Lipschitz bound in {\bf
  (H2)}).  This yields
\begin{align*}
  - \int_0 ^t \int_{\sX_N}
  \frac{\left[ \cL_N + \cL_N ^* \right]\left( \sqrt{G^N_\tau}
  \right)}{\sqrt{G^N_\tau}} G^N_\tau \dd \vartheta_{f_\infty}
  ^N\dd \tau \le - \int_0 ^t \int_{\sX_N}
  \frac{\left[ \cL_N + \cL_N^* \right] \left( \sqrt{G^N_\tau}
  \right)}{\sqrt{G^N_\tau}} F^N_\tau \dd
  \vartheta_{f_\infty} ^N \dd \tau \\
  + N^d \epsilon_L(N) \left( \sup_{\tau \in [0,t]} \mathsf R_2(\tau) \right)
  \int_0 ^t \mathsf  R_1(\tau) \dd \tau.
\end{align*}
   The fact that this serves as such test function can be
explicitly verified: in the ZRP case the specific calculations
 are performed in Section \ref{sec:entropy}. 

We now continue by using that
\begin{align*}
  - 2 \int_{\sX_N} \sqrt{G^N_\tau} \cL_N
  \left( \sqrt{G^N_\tau} \right) \dd \vartheta_{f_\infty} ^N =
  - \int_{\sX_N} \frac{\left[ \cL_N + \cL_N ^* \right]\left(
  \sqrt{G^N_\tau} \right)}{\sqrt{G^N_\tau}} G^N_\tau \dd
  \vartheta_{f_\infty} ^N,
\end{align*}
we thus deduce
\begin{align*}
  - \frac{2}{N^d} \int_{\sX_N} \sqrt{G^N_\tau} \cL_N
  \left( \sqrt{G^N_\tau} \right) \dd \vartheta_{f_\infty} ^N \le
  - \frac{1}{N^d} \int_0 ^t \int_{\sX_N} F^N_\tau \cL_N \ln F^N_\tau
  \dd\vartheta_{f_\infty} ^N \dd \tau \\
  + \epsilon_L(N) \left(\sup_{\tau \in [0,t]} \mathsf R_2(\tau)
  \right) \int_0 ^t \mathsf  R_1(\tau) \dd \tau.
\end{align*}
By plugging this last estimate
into~\eqref{eq:ent1}-\eqref{eq:ent2}-\eqref{eq:ent3} we obtain
\begin{align*}
  & H^N \left(\mu_t ^N \vert \vartheta_{f_t ^N}^N \right)
  \le H^N \left(\mu_0 ^N \vert \vartheta_{f_\infty}^N \right) -
  H^N \left(\vartheta_{f_0^N} ^N \vert \vartheta_{f_\infty}^N \right)
  \\
  & \hspace{3cm}
    + \epsilon_{NL}(N) \int_0 ^t \mathsf R_1(\tau) \dd \tau +
  \epsilon_L(N) \left( \sup_{\tau \in [0,t]} \mathsf R_2(\tau)
  \right) \int_0 ^t \mathsf  R_1(\tau) \dd \tau.
\end{align*}
and finally (using the linear consistency again at time $t=0$)
\begin{align*}
  H^N \left(\mu_t ^N \vert \vartheta_{f_t^N}^N \right) 
  \le H^N
  \left(\mu_0 ^N \vert \vartheta_{f_0^N}^N \right) 
  + \epsilon_L(N)
  R_2(0) + \epsilon_{NL}(N) \int_0 ^t \mathsf R_1(\tau) \dd \tau
  \\ + 2 \epsilon_L(N) \left( \sup_{\tau \in [0,t]} \mathsf
  R_2(\tau) \right) \int_0 ^t \mathsf R_1(\tau) \dd \tau
\end{align*}
which concludes the proof.
\end{proof}

\begin{remark}
  Note that $\| \mu^N - \vartheta_f^N \|_{\Lip^*} \to 0$ as
  $N \to \infty$ implies that the empirical
  measure~\eqref{eq:emp_meas_dirac} sampled from the law
  $\mu^N$ satisfies
  \begin{equation}
    \label{eq:hydro_profile}
    \forall \, \phi \in C^1(\T^d), \ \forall \, \epsilon >0, \
    \forall \, t \ge 0,\quad
    \lim_{N\rightarrow\infty} \mu^N\left( \left\{ |\langle
        \alpha^N_\eta, \phi \rangle - \langle f, \phi \rangle
        | > \epsilon \right\} \right) = 0
  \end{equation}
  which can be quantified (thus recovering quantitatively
  the type of convergence from~\cite{GPV88}):
  \begin{align*}
    & \mu^N\left( \left\{ |\langle
      \alpha^N_\eta, \phi \rangle - \langle f, \phi \rangle
      | > \epsilon \right\} \right) \\ 
    & \le \mu^N\left( \left\{ \langle
      \alpha^N_\eta, \phi \rangle \ge \langle f, \phi \rangle
      + \epsilon \right\} \right) + \mu^N\left( \left\{ \langle
      \alpha^N_\eta, \phi \rangle \le \langle f, \phi \rangle
      - \epsilon \right\} \right) \\
    & \le \int_{\sX_N} \left[ F_\epsilon ^+ \left( \left\langle \phi,
      \alpha_\eta ^N \right\rangle \right)  -
      F_\epsilon ^+  \left( \left\langle \phi, f \right\rangle
      \right) + F_\epsilon ^-
      \left( \left\langle \phi, \alpha_\eta ^N
      \right\rangle \right)  - F^-_\epsilon  \left( \left\langle \phi,
      f \right\rangle \right) \right] \dd \mu_t^N \\
    & \le \int_{\sX_N} \int_{\sX_N} \left[ F_\epsilon ^+ \left(
      \left\langle \phi,  \alpha_\eta ^N \right\rangle \right)  -
      F_\epsilon ^+  \left( \left\langle \phi, \alpha_\zeta ^N \right\rangle
      \right) \right] \dd \mu^N(\eta) \dd \vartheta_f ^N(\zeta) \\
    & \qquad + \int_{\sX_N} \int_{\sX_N} \left[ F_\epsilon^-
      \left( \left\langle \phi, \alpha_\eta ^N
      \right\rangle \right)  - F^-_\epsilon  \left( \left\langle \phi,
      \alpha_\zeta ^N \right\rangle \right) \right] \dd \mu^N(\eta) \dd
      \vartheta_f ^N(\zeta) \\
    & \qquad + \int_{\sX_N} \left[ F^+_\epsilon  \left(
      \left\langle \phi, \alpha_\zeta ^N \right\rangle \right) -
      F^+_\epsilon  \left( \left\langle \phi,
      f \right\rangle \right) + F^-_\epsilon  \left( \left\langle \phi,
      \alpha_\zeta ^N \right\rangle \right) - F^-_\epsilon
      \left( \left\langle \phi, f \right\rangle \right) \right]
      \dd \vartheta_f ^N(\zeta) 
  \end{align*}
  where $F_\epsilon ^\pm$ are mollified characteristic functions
  of respectively
  $\{ z \ge \left\langle \phi, f \right\rangle + \epsilon \}$ and
  $\{ z \le \left\langle \phi, f \right\rangle - \epsilon \}$,
  and we deduce the estimate
  \begin{equation}
    \label{eq:cvg-proba}
    \mu^N\left( \left\{ |\langle
        \alpha^N_\eta, \phi \rangle - \langle f, \phi \rangle
        | > \epsilon \right\} \right) \lesssim \epsilon^{-1}
    \left( \| \mu^N - \vartheta_f ^N \|_{\Lip^*} +
      N^{-\frac{d}{2}} + \mathsf R_3(N) \right)
  \end{equation}
  where $\mathsf R_3(N)$ is the convergence rate of the Riemann
  sum for $\phi f$. This follows from
  $\| F_\epsilon ^{\pm} \|_{\Lip} \lesssim \epsilon^{-1}$, the Cauchy-Schwarz inequality and the following estimate:
  \begin{align*}
    & \int_{\sX_N} \left| \left\langle \phi, \alpha_\zeta^N      \right\rangle - \left\langle \phi, f \right\rangle     \right|^2 \dd \vartheta_{f} ^N(\zeta) = \int_{\sX_N} \left| N^{-d} \sum \phi\left(\frac{x}{N}\right) \zeta_x - \int_{\T^d} \phi f   \right|^2 \dd \vartheta_{f} ^N(\zeta) \\
    & \lesssim \left( N^{-d} \sum_{x \in \T^d_N}
      \phi\left(\frac{x}{N}\right) f\left(\frac{x}{N}\right) -
      \int_{\T^d} \phi f \right)^2 + \int_{\sX_N} \left( N^{-d} \sum_{x \in \T^d_N} \phi\left(\frac{x}{N}\right) \left( \zeta_x - f\left(\frac{x}{N}\right) \right)  \right)^2 \dd \vartheta_f ^N\\
    & \lesssim \mathsf R_3(N)^2 + N^{-d}.
  \end{align*}
  The combination of~\eqref{eq:cvg-proba} and~\eqref{eq:cvg}
  provides a quantitative rate in~\eqref{eq:hydro_profile}.
\end{remark}

\section{The Zero-Range Process}


The state space is $\sX=\N$, meaning that the number of
particles is not limited at each site. Given a \emph{transition
  function} $p \in P(\T^d_N)$ with $p(0)=0$ and bounded second moment, and a \emph{jump rate
  function} $g : \N \rightarrow \R_+$, the base generator
$\hat \cL_N$ writes
\begin{equation}
  \label{eq:zrp-gen}
  \forall \, \Phi \in C_b(\sX_N), \ \forall \, \eta \in \sX_N, \quad
  \hat \cL_N \Phi(\eta) := \sum_{x,y \in \T^d_N} p(y-x) g(\eta_x)
  \left[\Phi(\eta^{xy}) - \Phi(\eta) \right]
\end{equation}
where $\eta^{xy}$ is defined by
\begin{equation}
  \label{eq:def-etaxy}
  \eta^{xy}_z =
  \begin{cases}
    \eta_z \quad \text{ if } z \not \in \{x,y\} \\
    \eta_x-1 \quad \text{ if } z=x \\
    \eta_y+1 \quad \text{ if } z=y.
  \end{cases}
\end{equation}
The local
equilibrium structure is given by
\begin{align}
  \label{eq:zrp-gibbs}
  & n_\lambda(k):= \frac{\lambda^k}{g(k)!
    Z(\lambda)} \quad \text{ with } \quad
    Z(\lambda) := \sum_{k=0} ^{+\infty} \frac{\lambda^k}{g(k)!}\\
  & \sigma \ \text{ is defined implicitely by } \ \sigma(\rho)
    \frac{Z'(\sigma(\rho))}{Z(\sigma(\rho))} \equiv \rho
\end{align}
with the notation $g(k)!:=g(k) g(k-1) \cdots g(1)$ and
$g(0)!=1$. The pair $(n_\lambda,\sigma)$ satisfies
\begin{equation*}
  \begin{cases}
    \ds \mathbb E_{n_{\sigma(\rho)}}[k] = \sum_{k \ge 0} \frac{k
      \sigma(\rho)^k}{g(k)! Z(\sigma(\rho))} = \rho, \\[2mm]
    \ds \mathbb E_{n_{\sigma(\rho)}}[g(k)]= \sum_{k \ge 0}
    \frac{\sigma(\rho)^k}{g(k-1)! Z(\sigma(\rho))} =
    \sigma(\rho).
  \end{cases}
\end{equation*}
When $f \equiv \rho \in [0,+\infty)$ is constant, the local
equilibrium measure
$\vartheta_{\rho}^N=\mu_{\infty,\sigma(\rho)}^N$
in~\eqref{eq:inv-meas-gen}-\eqref{eq:gibbs-gen} is invariant with
average number of particles $\rho$. The \emph{mean
  transition rate} is defined by
\begin{equation}
  \label{eq:transition}
  \gamma := \sum_{x \in \Z^d} x p(x) \in \R^d.
\end{equation}
 When
$\gamma \not =0$, the first non-trivial scaling limit is the
hyperbolic scaling $\cL_N := N \hat \cL_N$, and the expected
limit equation is
\begin{equation}
  \label{eq:zrp-hyp}
  \partial_t f = \gamma \cdot \nabla [\sigma(f)] \quad \text{
    with $\gamma$ defined in~\eqref{eq:transition}.}
\end{equation}
We assume in this paper that $\gamma =0$, then the first
non-trivial scaling limit is the parabolic scaling
$\cL_N := N^2 \hat \cL_N$, and the expected limit equation is
\begin{equation}
  \label{eq:zrp-diff}
  \partial_t f = \mathtt A : \nabla^2 \sigma(f) \quad \text{ with } \quad
  \mathtt A = \left[ (a_{ij})_{i,j=1} ^d \right] \quad \text{ and } \quad a_{ij} := \frac12 \sum_{x \in \Z^d} p(x) x_i x_j.
\end{equation}

The main result is:
\begin{theorem}[Hydrodynamic limit for the one and two-dimensional
  centred ZRP]
  \label{theo:ZRP}
  \mbox{ }

  Consider $\hat \cL_N$ defined in~\eqref{eq:zrp-gen} with $d=1$ or $d=2$, with $\gamma=0$, and where the jump rate jump rate $g : \R_+ \to [0,+\infty)$ satisfies $g(0) = 0$, $g(n)>0$ for all $n>0$, $g$ non-decreasing and uniformly Lipschitz on $\R_+$, and finally there are $n_0 > 0$ and $c_g > 0$ such that $g(n')-g(n) \ge c_g$ for any $n'\ge n+n_0$. 

  Consider $\mu_0^N \in P_1(\sX_N)$ for all $N \ge 1$, where $\sX_N := \N^{\T_N}$. Define $\mu^N_t := e^{t \cL_N} \mu^N_0$ with $\cL_N:=N^2\hat \cL_N$ and, given $f_0 \in C^3(\T)$ with $f_0 \ge \delta>0$, consider $f_t \in C([0,T],C^3(\T^d))$ solution to~\eqref{eq:zrp-diff}. Then there is $C > 0$ so that for any $\varepsilon_0 > 0$ it holds
  \begin{align}
    \label{eq:dec-zrp-diff}
    & \sup_{t \in [0,T]} \left\| \mu_t ^N - \vartheta_{f_t}^N
    \right\|_{\Lip^*}  \le  \left\| \mu_0 ^N-
      \vartheta_{f_0} ^N \right\|_{\Lip^*} + 
      \begin{cases} 
      C N^{-\frac{1}{6}+ \varepsilon_0}, \quad \text{ if } d=1\\[2mm]
      C[ \ln (N) ]^{-\frac18 +\varepsilon_0}, \quad \text{ if } d=2, 
      \end{cases}   \\ \nonumber 
      \vspace{0.001cm}\\
    \label{eq:dec-zrp-diff-ent}
    & \sup_{t \in [0,T]} H^N \left( \mu_t ^N | \vartheta_{f_t}^N
    \right) \le 
    H^N \left( \mu_0 ^N | \vartheta_{f_0} ^N \right)
    + \begin{cases} 
      C N^{-\frac{1}{6}+ \varepsilon_0}, \quad \text{ if } d=1\\[2mm]
      C[ \ln (N) ]^{-\frac18 +\varepsilon_0}, \quad \text{ if } d=2.
      \end{cases}
  \end{align}
  \end{theorem}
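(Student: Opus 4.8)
The plan is to verify the abstract hypotheses \textbf{(H1)--(H4)} for the centred ZRP under parabolic scaling and then simply invoke Theorem~\ref{thm:main}, optimising the resulting error bound over the free parameters hidden in $\mathsf R_1, \mathsf R_2, \epsilon_L, \epsilon_{NL}$. For \textbf{(H2)}, the natural choice is $\fB = L^1(\T^d)$ (or a slightly stronger norm), and the key point is that $f_0 \in C^3$ with $f_0 \ge \delta$ propagates: since $\sigma$ is smooth and increasing, \eqref{eq:zrp-diff} is a uniformly parabolic quasilinear equation as long as $f$ stays bounded away from $0$ and $\infty$, so by standard Schauder/maximum-principle arguments $f_t$ stays in $C^3$ with a uniform lower bound, $T_m = +\infty$, and $\mathsf R_1(t) = \|f_t - f_\infty\|_\fB$ decays — exponentially, by the spectral gap of the linearised operator around the constant $f_\infty$ — which is exactly what makes $\int_0^\infty \mathsf R_1(t)\,dt < \infty$ and hence the estimates uniform in time. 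The Lipschitz bounds entering $\mathsf R_2$ follow from explicit computation with the product structure \eqref{eq:zrp-gibbs}: $\ln G^N_t = \sum_x \big[ \eta_x \ln(\sigma(f_t(x/N))/\sigma(f_\infty)) - \ln(Z(\sigma(f_t(x/N)))/Z(\sigma(f_\infty))) \big]$, whose $\Lip(\sX_N)$ seminorm after dividing by $N^d$ is controlled by $\sup_x |\ln \sigma(f_t(x/N)) - \ln\sigma(f_\infty)|$, bounded uniformly since $f_t$ is bounded above and below; the second term in $\mathsf R_2$ requires the same kind of explicit manipulation of $[\cL_N + \cL_N^*]\sqrt{G^N_t}/\sqrt{G^N_t}$ and uses the $C^3$ regularity of $f_t$ to control discrete second differences.

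For \textbf{(H1)}, the microscopic stability, I would use the attractiveness hypotheses on $g$ ($g$ non-decreasing) together with a coupling argument in the spirit of Kruzhkov $L^1$-contraction: the basic coupling of two ZRP copies driven by the same Poisson clocks is monotone, and the normalised $\ell^1$ distance $\tfrac{1}{N^d}\sum_x |\eta_x - \zeta_x|$ is a supermartingale (in fact non-increasing in expectation) under this coupling — this is precisely the microscopic analogue of the $L^1$ contraction for the limit PDE, and it gives $C_S = 1$. Dualising, $[e^{t\cL_N}\Phi]_{\Lip(\sX_N)} \le [\Phi]_{\Lip(\sX_N)}$. The condition $g(n') - g(n) \ge c_g$ for $n' \ge n + n_0$ is the uniform strict monotonicity needed to get genuine control (and is used again in the consistency step, via a microscopic Poincaré-type inequality, to close the block-estimate-free argument).

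The real work, and the main obstacle, is \textbf{(H3)} (and the closely related \textbf{(H4)}): bounding $\int_0^t \int (e^{(t-\tau)\cL_N}\Phi)[\cL_N^* - \cL_\infty^*]G^N_\tau\,d\vartheta^N_{f_\infty}\,d\tau$. Here $\cL_N^* G^N$ is the exact microscopic time-derivative of the local-equilibrium density along the ZRP, while $\cL_\infty^* G^N$ is its ``expected'' macroscopic counterpart built from \eqref{eq:zrp-diff}; their difference is a discrete-Taylor-expansion remainder plus a fluctuation term of the form $\sum_x \psi(x/N)\,\big(g(\eta_x) - \sigma(f_\tau(x/N))\big)$ — the term that in the classical approach is killed by the one- and two-block estimates. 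The idea is instead to integrate against $e^{(t-\tau)\cL_N}\Phi$, use the microscopic Lipschitz regularity of this evolved test function (the ``jump distance estimates'' alluded to in the introduction: $\Phi(\eta^{xy}) - \Phi(\eta)$ is controlled because $e^{t\cL_N}$ maps Lipschitz to Lipschitz by \textbf{(H1)}), and then absorb the fluctuation using a microscopic spectral-gap/Poincaré inequality for $\cL_N + \cL_N^*$ on a single site or a small box — the uniform monotonicity constant $c_g$ guarantees this Poincaré inequality with an $N$-independent constant. One then balances the Taylor remainder ($O(N^{-2}\cdot N^2) = O(1)$ per unit volume, needing the $C^3$ bound and a further gain from a discrete integration by parts against the Lipschitz test function, giving an effective $O(N^{-1})$) against the fluctuation/Poincaré term (which carries a factor $N^{-d/2}$ from the variance of a sum of $N^d$ weakly-correlated terms, times the inverse gap); introducing an auxiliary length scale $\ell$ and optimising yields $\epsilon_L(N), \epsilon_{NL}(N) \lesssim N^{-1/6+\varepsilon_0}$ in $d=1$ and the logarithmically weak rate $[\ln N]^{-1/8+\varepsilon_0}$ in $d=2$, the degradation in $d=2$ reflecting that the relevant Green's function / fluctuation bounds are only logarithmic there. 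Plugging these, together with the summable $\mathsf R_1$ and bounded $\mathsf R_2$, into \eqref{eq:cvg} and \eqref{eq:cvg-ent} gives \eqref{eq:dec-zrp-diff}--\eqref{eq:dec-zrp-diff-ent}.
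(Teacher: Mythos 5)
Your overall architecture is the right one (verify \textbf{(H1)}--\textbf{(H4)} and feed them into Theorem~\ref{thm:main}), and your treatment of \textbf{(H1)} (monotone basic coupling, $\ell^1$ contraction, $C_S=1$), \textbf{(H2)} (parabolic regularity, maximum principle, exponential relaxation making $\int_0^\infty \mathsf R_1$ finite) and the explicit computations behind $\mathsf R_2$ and \textbf{(H4)} matches the paper. But there is a genuine gap at the heart of \textbf{(H3)}. After localising on boxes of size $\ell$ and applying Cauchy--Schwarz together with the Poincar\'e inequality on the $\ell$-box (whose constant is $\sim \ell^2$, not $N$-independent as you suggest), the error is controlled by the Dirichlet form of the \emph{evolved} test function, i.e.\ by $\sum_{z\sim z'} g(\eta_z)\,|\Phi_{t-\tau}(\eta^{zz'})-\Phi_{t-\tau}(\eta)|^2$. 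The Lipschitz bound from \textbf{(H1)} alone only gives $|\Phi_{t-\tau}(\eta^{zz'})-\Phi_{t-\tau}(\eta)|\lesssim N^{-d}$, which after summing over the $N^d$ bonds and multiplying by the prefactor $\ell N^{d/2}$ in~\eqref{eq:dirichlet form to estimate} yields $J_t^N\lesssim \ell$ --- no decay at all. So "use the microscopic Lipschitz regularity of the evolved test function" is not enough to close the argument, and your proposal does not supply the extra gain.

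The missing idea is the following: couple $\eta^t$ with $(\eta^{zz'})^t$ by the basic coupling and use Lemma~\ref{lem:jumpdistance estim} to see that the two trajectories always differ by a single \emph{generalised jump} between two sites $x_-(t),x_+(t)$; the difference $S(t)=x_+(t)-x_-(t)$ is a symmetric random walk in a random environment which is absorbed at $0$, at which point $\Phi_0(\zeta^t)=\Phi_0(\eta^t)$ exactly. The condition $g(n')-g(n)\ge c_g$ for $n'\ge n+n_0$ is used precisely here, to lower-bound the jump rates of this defect walk and prove the claim~\eqref{eq:claim-jumps} that it makes $\gtrsim \beta tN^2$ jumps with overwhelming probability. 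The recurrence estimates for random walks then give a non-coalescence probability $\cO(t^{-1/2}N^{-1})$ in $d=1$ and $\cO(1/\ln(tN^2))$ in $d=2$, which is the genuine source of the rates $N^{-1/6}$ and $[\ln N]^{-1/8}$ after optimising $\ell$ (respectively $\ell=N^{1/3}$ and $\ell=[\ln N]^{1/8}$), and the reason the theorem is restricted to $d\le 2$. Your attribution of the logarithmic rate in $d=2$ to "Green's function bounds" gestures in this direction, but without the coupling/defect-walk construction the estimate on the Dirichlet form of $e^{(t-\tau)\cL_N}\Phi$ --- and hence the whole consistency bound --- cannot be derived. (A minor further point: the paper obtains $\epsilon_{NL}(N)=\cO(N^{-1})$, so the bottleneck in~\eqref{eq:cvg-ent} is $\epsilon_L$, not $\epsilon_{NL}$ as your last sentence implies.)
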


  The rest of the article is devoted to proving the abstract assumptions {\bf (H1)}--{\bf (H2)}--{\bf (H3)}--{\bf (H4)} for the ZRP in order to apply Theorem~\ref{thm:main}. Note that, although we prove the general assumptions  for the zero-range process, it is not hard to verify that these assumptions also hold for other prototypical symmetric and attractive models, such as the Simple Exclusion Process on $\{0,1\}^{\mathbb{T}_N}$. 

\section{Stability properties}
\label{sec:zrp}



\subsection{Microscopic stability on average}
\label{subsec:micro1}

We use the standard coupling as in~\cite{Liggett1985}. Let us define the following coupling operator
\begin{align}
  \nonumber
  \widetilde{\mathcal{L}}_N\Psi(\eta, \zeta) :=
  & \sum_{x,y \in
    \T^d_N} p(y-x) \Big( g(\eta_x) \wedge g(\zeta_x) \Big)
    \Big[ \Psi(\eta^{xy}, \zeta^{xy}) -
    \Psi(\eta,\zeta) \Big] \\
  \nonumber 
  & + \sum_{x,y \in \T^d_N} p(y-x) \Big(g(\eta_x) - g(\eta_x)
    \wedge g(\zeta_x) \Big)
    \Big[ \Psi(\eta^{xy}, \zeta) - \Psi(\eta, \zeta) \Big] \\
  \label{def:coupling-zrp}
  & + \sum_{x,y \in \T^d_N} p(y-x) \Big( g(\zeta_x) - g(\eta_x)
    \wedge g(\zeta_x) \Big) \Big[ \Psi(\eta,\zeta^{xy}) -
    \Psi(\eta, \zeta) \Big]
\end{align}
for a two-variable test function $\Psi(\eta,\zeta)$. Then we have
the following three properties:
\begin{itemize}
\item[\textbf{(P1)}]
  $\widetilde{\mathcal{L}}_N\Phi(\eta) = \hat \cL_N \Phi(\eta)$ and
  $\widetilde{\mathcal{L}}_N\Phi(\zeta) = \hat \cL_N
  \Phi(\zeta)$ for a function $\Phi$ of only one variable.
\item[\textbf{(P2)}] The semigroup $e^{t\widetilde{\mathcal{L}}_N}$
  preserves the sign (like all generators of Markov processes). 
\item[\textbf{(P3)}] The coupling operator $\widetilde{\mathcal{L}}_N$ satisfies the inequality
  \begin{align}
    \label{eq:P3}
    \widetilde{\mathcal{L}}_N \left( \sum_{z \in \mathbb{T}_N^d}
    \vert \eta_z- \zeta_z\vert \right) \leq 0.
  \end{align}
\end{itemize}

\textbf{(P1)} and \textbf{(P2)} are trivial, and we prove \textbf{(P3)} below. Let us first explain how to deduce the stability from
\textbf{(P1)}-\textbf{(P2)}-\textbf{(P3)}. \textbf{(P1)} implies that
\begin{align*}
  \Phi_t(\eta) - \Phi_t(\zeta) = e^{t\mathcal{L}_N} \Phi_0 (\eta)
  - e^{t\mathcal{L}_N} \Phi_0(\zeta) =
  e^{t\widetilde{\mathcal{L}}_N} \Psi_0(\eta,\zeta) =:
  \Psi_t(\eta,\zeta)
\end{align*}
with $\Psi_0(\eta,\zeta) := \Phi_0(\eta) - \Phi_0(\zeta)$. If we
assume that the Lipschitz bound holds initially for some constant
$C \ge 0$, then
\begin{align*}
  - \frac{C}{N^d} \left( \sum_{z \in \mathbb{T}_N^d}
  \vert \eta_z- \zeta_z\vert \right) \le \Psi_0(\eta,\zeta) \le
  \frac{C}{N^d} \left( \sum_{z \in \mathbb{T}_N^d} \vert \eta_z-
  \zeta_z\vert \right)
\end{align*}
\textbf{(P2)} then implies
\begin{align*}
  - \frac{C}{N^d} e^{t\widetilde{\mathcal{L}}_N} \left( \sum_{z
  \in \mathbb{T}_N^d} \vert \eta_z- \zeta_z\vert \right) \le
  \Psi_t(\eta,\zeta) \le \frac{C}{N^d}
  e^{t\widetilde{\mathcal{L}}_N}\left( \sum_{z \in
  \mathbb{T}_N^d} \vert \eta_z- \zeta_z\vert \right)
\end{align*}
and finally \textbf{(P3)} implies
\begin{align*}
  e^{t\widetilde{\mathcal{L}}_N}\left( \sum_{z \in
  \mathbb{T}_N^d} \vert \eta_z- \zeta_z\vert \right) \le \left(
  \sum_{z \in \mathbb{T}_N^d} \vert \eta_z- \zeta_z\vert \right)
\end{align*}
which concludes the proof of~\eqref{eq:micro-stab} with $C_S=1$. To prove the crucial property \textbf{(P3)} (equation~\eqref{eq:P3}), we compute
\begin{align*} 
  \widetilde{\mathcal{L}}_N \left( \sum_{z \in \T^d_N} |\eta_z
  - \zeta_z| \right)
  & = \sum_{x,y \in \T^d_N} p(y-x) \Big( g(\eta_x) - g(\eta_x) \wedge
    g(\zeta_x) \Big) \\
  & \hspace{2cm} \times \Big[ |\eta^{xy}_x- \zeta_x| +
    |\eta^{xy}_y- \zeta_y| - |\eta_x- \zeta_x| -
    |\eta_y-\zeta_y| \Big] \\
  & \quad + \sum_{x,y \in \T^d_N} p(y-x) \Big( g(\zeta_x) - g(\eta_x)
    \wedge g(\zeta_x) \Big) \\
  & \hspace{2cm} \times \Big[ |\eta_x- \zeta^{xy}_x| +
    |\eta_y - \zeta^{xy}_y| - |\eta_x - \zeta_x| -
    |\eta_y -\zeta_y| \Big].
\end{align*}
When $g(\eta_x) - g(\eta_x) \wedge g(\zeta_x) >0$ necessarily
$\eta_x- \zeta_x \ge 1$ and
\begin{equation*}
  \Big[ |\eta^{xy}_x- \zeta_x| + |\eta^{xy}_y- \zeta_y| -
  |\eta_x- \zeta_x| - |\eta_y-\zeta_y| \Big] \le 0.
\end{equation*}
When $g(\zeta_x) - g(\eta_x) \wedge g(\zeta_x) >0$ necessarily
$\zeta_x - \eta_x \ge 1$ and
\begin{equation*}
  \Big[ |\eta_x- \zeta^{xy}_x| + |\eta_y- \zeta^{xy}_y| -
  |\eta_x- \zeta_x| - |\eta_y-\zeta_y| \Big] \le 0.
\end{equation*}
This proves~\eqref{eq:micro-stab} with $C_S=1$.

\begin{remark}
  The microscopic stability property is valid in any dimension $d \ge 1$. 
\end{remark}

\subsection{Microscopic stability in support}
\label{subsec:micro2}

In fact, with a minor modification of the previous argument, it is possible to strengthen the conclusion and obtain the stability at the level of the support. We record this property in the following lemma, which will prove useful in the consistency estimate. 
\begin{lemma}
  \label{lem:jumpdistance estim}
  Let $\eta^t$ and $\zeta^t$ be two attractive zero-range processes (i.e. with non-decreasing jump rate $g$) coupled along the standard coupling, then
  \begin{equation}\label{eq: support stability}
    \| \eta^t - \zeta^t \|_{\ell^1} := \sum_{x \in \T^d} \left|\eta_x^t - \zeta_x^t\right| \le \| \eta^0 - \zeta^0 \|_{\ell^1} := \sum_{x \in \T^d} \left|\eta_x^0 - \zeta_x^0\right|.
  \end{equation}
\end{lemma}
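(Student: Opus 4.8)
The plan is to read the generator computation already performed for property~\textbf{(P3)} as a \emph{pathwise} statement about the jumps of the coupled chain generated by $\widetilde{\mathcal L}_N$ in~\eqref{def:coupling-zrp}. First I would check that the coupled process $(\eta^t,\zeta^t)$ is well defined and non-explosive: each marginal is a zero-range process and hence conserves the total number of particles, and since $g$ is uniformly Lipschitz with $g(0)=0$ one has $g(k)\le\Lip(g)\,k$, so the total jump rate out of any reachable configuration is bounded by $\Lip(g)$ times the (conserved) total mass of $\eta^0$ plus that of $\zeta^0$. Consequently, almost surely only finitely many jumps occur on any bounded time interval, the map $t\mapsto\|\eta^t-\zeta^t\|_{\ell^1}$ is a pure-jump (right-continuous, piecewise constant) function, and it suffices to prove that it does not increase across a single jump.

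Next I would split into the three jump mechanisms encoded by the three sums in~\eqref{def:coupling-zrp}. A jump of the first kind moves one particle from $x$ to $y$ simultaneously in both configurations; then $\eta_z-\zeta_z$ is unchanged at every site $z$, so $\|\eta-\zeta\|_{\ell^1}$ is unchanged. A jump of the second kind moves a particle from $x$ to $y$ in $\eta$ only, and has positive rate only when $g(\eta_x)>g(\zeta_x)$; monotonicity of $g$ forces $\eta_x>\zeta_x$, hence $\eta_x-\zeta_x\ge1$ since $\sX=\N$ is integer-valued. Thus $|\eta_x-\zeta_x|$ decreases by exactly $1$, while $|\eta_y-\zeta_y|$ changes by at most $1$; since $x\ne y$ (because $p(0)=0$) these two sites are distinct and the net change of $\|\eta-\zeta\|_{\ell^1}$ is $\le-1+1=0$. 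A jump of the third kind is the mirror image, with $\eta$ and $\zeta$ exchanged. This is precisely the sign analysis underlying~\eqref{eq:P3}, now recorded jump by jump.

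Combining the two points, $t\mapsto\|\eta^t-\zeta^t\|_{\ell^1}$ is non-increasing along every trajectory, which is~\eqref{eq: support stability}. The argument is dimension-free and uses only the monotonicity of $g$ together with the integrality of the single-site state space; beyond that, the sole point requiring any care is the non-explosion of the coupled chain, which as noted is immediate from mass conservation and the at-most-linear growth of $g$. So I do not expect a genuine obstacle here: the content is the case analysis of the previous subsection, reorganised as a statement about the sample paths of the basic coupling.
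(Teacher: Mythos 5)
Your proposal is correct, and the case analysis at its heart (rate positive only when $g(\eta_x)>g(\zeta_x)$, hence $\eta_x-\zeta_x\ge 1$ by monotonicity and integrality, so the discrepancy at $x$ drops by exactly $1$ while the one at $y\ne x$ grows by at most $1$) is exactly the computation the paper performs. Where you differ is in how this computation is converted into the almost-sure conclusion~\eqref{eq: support stability}. The paper stays at the level of the generator: it applies $\widetilde{\mathcal L}_N$ to the truncated observable $\bigl(\sum_z|\eta_z-\zeta_z|-K\bigr)_+$, shows the result is nonpositive, integrates in time to get monotonicity of its expectation under the coupling measure $\pi^N_t$, and then chooses $K=\|\eta^0-\zeta^0\|_{\ell^1}$ so that the vanishing of a nonnegative expectation forces the support condition $\operatorname{supp}(\pi^N_t)\subset\{\|\eta-\zeta\|_{\ell^1}\le K\}$. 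You instead argue pathwise: after checking non-explosion (which the paper leaves implicit, and which you justify correctly from mass conservation and $g(k)\le\Lip(g)\,k$), the trajectory of $\|\eta^t-\zeta^t\|_{\ell^1}$ is piecewise constant and each jump of any of the three types is non-increasing, so the function is monotone along every sample path. The pathwise route is more direct and avoids the truncation device entirely; the generator route has the advantage of requiring no discussion of the jump structure of trajectories and of producing, with the same effort, the differential inequality for arbitrary $K$ (which is the form actually reused later when the paper tracks the support $\{\|\eta-\zeta\|_{\ell^1}\le 2\}$ in the Dirichlet-form estimate). Both are valid; no gap.
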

\begin{proof}[Proof of Lemma \ref{lem:jumpdistance estim}]
  Given $K>0$, whenever $g(\eta_z) > g(\zeta_z)$ (which implies $\eta_z > \zeta_z$),
  \begin{align*}
    &\Bigg[\left( \sum_{x \neq z, z'} |\eta_x - \zeta_x| - K\right) + |\eta_z - \zeta_z - 1| + |\eta_{z'} - \zeta_{z'} + 1|\Bigg] - \left[  \sum_{x} |\eta_x - \zeta_x| - K \right] \\ 
    &\leq \left( \sum_{x \neq z, z'} |\eta_x - \zeta_x| - K \right) + |\eta_z - \zeta_z| - 1 + |\eta_{z'} - \zeta_{z'}| + 1 - \left[  \sum_{x} |\eta_x - \zeta_x| - K \right] = 0.
  \end{align*}
  This implies
  $$ \left(\sum_{x} |\eta_x^{zz'} - \zeta_x| - K  \right)_+ \leq \left( \sum_{x} |\eta_x - \zeta_x| - K \right)_+$$
  and by summation
 $$ \sum_{zz'}p(z' - z) (g(\eta_z) - g(\zeta_z))_+ \left[\left(\sum_{x} |\eta_x^{zz'} - \zeta_x| - K  \right)_+ - \left( \sum_{x} |\eta_x - \zeta_x| - K \right)_+ \right] \leq 0.$$
 
A symmetric inequality holds whenever $g(\zeta_z) > g(\eta_z)$, and we conclude  that
 $$\widetilde{\mathcal{L}} \left( \left(\sum_z |\eta_z - \zeta_z| - K \right)_+ \right) \leq 0$$ (the part of the coupling generator where jumps occur simultaneously does not contribute trivially). We deduce that the coupling measure $\Pi_t$ satisfies 
 $$\frac{\dd}{\dd t} \left( \iint \left( \sum_z |\eta_z - \zeta_z| - K \right)_+ \dd \pi_t^N(\eta, \zeta) \right) \leq 0.$$
 Choosing $K=\|\eta^0-\zeta^0\|_{\ell^1}$, we conclude $\operatorname{supp}(\pi_t^N) \subset \{ \| \eta - \zeta\|_{\ell^1} \leq K \}$ for all times. 
\end{proof}

\subsection{Macroscopic stability}

In the parabolic scaling the limit PDE is the nonlinear diffusion equation~\eqref{eq:zrp-diff}. It is proved in~\cite[Chapter~2, Section~3]{KL99} that our assumptions on $g$ imply that $Z$ defined in~\eqref{eq:zrp-gibbs} has infinite radius of convergence, that $R(\lambda) = \lambda \partial_\lambda \ln(Z(\lambda)) = \frac{1}{Z(\lambda)} \sum_{n\geq0} \frac{n\lambda^n}{g(n)!}$ is well-defined on $\R_+$ and strictly increasing, and its inverse $\sigma=R^{-1}$ is smooth with all its derivatives uniformly bounded on $\R_+$ and $\sigma'(\R_+) \subset [\Lambda,\Lambda^{-1}]$ with $\Lambda \in (0,1)$. Hence by first applying the De Giorgi-Nash $L^\infty \cap C^\alpha$ estimate and second iterating Schauder estimates, we prove that an initial data in $C^3(\T^d)$ gives rise to a global solution with uniform in time bounds in $L^\infty \cap C^3(\T^d)$. The condition $f_t \ge \delta$ is propagated in time by maximum principle. Finally $f_t \to \rho$ exponentially fast as $t \to \infty$ in $C^\infty$ by parabolic regularization and $L^2$ spectral gap.

\begin{remark}
  This macroscopic stability is valid in any dimension $d \ge 1$.
\end{remark}

\section{Monge-Kantorovich consistency}

\subsection{Preliminary on the local Gibbs measure}

Given $f_t \in C^{3}(\T^d)$ with $f> \delta$, $\delta>0$, and $\rho := \int_{\T^d} f$, the density of the local Gibbs measure relatively to the invariant measure with mass $\rho$ is given as follows for the ZRP:
\begin{equation}
  \label{eq:zrp-psi}
  G_t^N (\eta) := \frac{{\rm d} \vartheta_f^N(\eta)}{{\rm d}
    \vartheta_{\rho}^N(\eta)} = \prod_{x \in \T^d_N}
  \left(\frac{\sigma \left(f_t \left( \frac{x}{N} \right)
      \right)}{\sigma(\rho)} \right)^{\eta_x}
  \left(\frac{ Z(\sigma \left(f_t \left( \frac{x}{N} \right)
      \right))}{Z(\sigma(\rho))} \right)^{-1}.
\end{equation}
The phase space $\sX$ is unbounded but the
invariant measures have exponential moments,
see~\cite[Corollary~3.6]{KL99}.

\subsection{The error estimate}

\begin{proposition}[Consistency in weak measure distance]
  \label{prop:zrp-consist-tspt}
  Assume $d=1$ or $d=2$, and consider a solution on
  $[0,T]$ to \eqref{eq:zrp-diff} satisfying 
  $f_t \in C^{3}(\T)$ with
    $f_t \ge \delta$ for some $\delta >0$ and $G_t ^N$ defined in~\eqref{eq:zrp-psi}.
  Then for every $\Phi \in \operatorname{Lip}(\sX_N)$ and every $\varepsilon_0>0$: 
    \begin{align*}
  \begin{cases} \ds
    I_t ^N := \int_0 ^t \int_{\sX_N} \left( e^{(t-\tau)\cL_N^*} \Phi    \right) \left[ \cL_N ^* G^N_\tau  - \cL_\infty ^* G^N_\tau \right] \dd \vartheta_{f_\infty} ^N \dd \tau = \cO \left( N^{-\frac{1}{6} + \varepsilon_0} \right), \ \text{ when } d=1
    \\[3mm] \ds
    I_t ^N := \int_0 ^t \int_{\sX_N} \left( e^{(t-\tau)\cL_N^*} \Phi    \right) \left[ \cL_N ^* G^N_\tau  - \cL_\infty ^* G^N_\tau \right] \dd \vartheta_{f_\infty} ^N \dd \tau =   \mathcal{O} \left( [ \ln (N) ]^{-\frac18} \right), \ \text{ when } d=2, 
  \end{cases}
  \end{align*}
  where the constant depends on the $C^{3}(\T^d)$ bound on $f_t$.
\end{proposition}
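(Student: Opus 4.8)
The plan is to convert the consistency error into a comparison, carried out under the product local Gibbs measure, between the action of the diffusively rescaled microscopic generator and that of its macroscopic counterpart, and then to absorb the residual discrepancy with a higher‑order (``jump distance'') microscopic stability estimate. Write $s:=t-\tau$, $h_\tau:=\sigma(f_\tau)$, and $\psi_\tau:=e^{s\cL_N}\Phi$; by homogeneity I may assume $[\Phi]_{\Lip(\sX_N)}=1$, and by \textbf{(H1)} (proved with $C_S=1$) one has $[\psi_\tau]_{\Lip(\sX_N)}\le 1$, so $e^{(t-\tau)\cL_N^*}\Phi$ is replaced by $\psi_\tau$. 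Since the \emph{symmetric} ZRP is reversible with respect to $\vartheta_{f_\infty}^N$ one has $\int\psi_\tau\,\cL_N^*G_\tau^N\dd\vartheta_{f_\infty}^N=\int(\cL_N\psi_\tau)\dd\vartheta_{f_\tau}^N=\mathbb E_{\vartheta_{f_\tau}^N}[\cL_N\psi_\tau]$, while, reading $\cL_\infty^*$ off~\eqref{eq:LinfN} and using $G_\tau^N\dd\vartheta_{f_\infty}^N=\dd\vartheta_{f_\tau}^N$, the second term equals $\mathbb E_{\vartheta_{f_\tau}^N}[\psi_\tau\,V_\tau]$ with $V_\tau=\sum_{x}\sigma'(f_\tau(\tfrac{x}{N}))\,[\mathtt A:\nabla^2 h_\tau(\tfrac{x}{N})]\,\tfrac{\eta_x-f_\tau(x/N)}{h_\tau(x/N)}$. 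So it suffices to bound $\int_0^t\big(\mathbb E_{\vartheta_{f_\tau}^N}[\cL_N\psi_\tau]-\mathbb E_{\vartheta_{f_\tau}^N}[\psi_\tau V_\tau]\big)\dd\tau$.

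The computation of $\mathbb E_{\vartheta_{f_\tau}^N}[\cL_N\psi_\tau]$ is where block estimates are avoided: the one‑site identity $n_\lambda(k)g(k)=\lambda n_\lambda(k-1)$, i.e.\ $\mathbb E_{n_\lambda}[g(k)F(k-1)]=\lambda\mathbb E_{n_\lambda}[F(k)]$, applied coordinatewise under the \emph{product} measure $\vartheta_{f_\tau}^N$, replaces $g(\eta_x)$ by the exact local density $h_\tau(x/N)$ with no spatial averaging. After this substitution and a relabelling using the symmetry of $p$ one gets
\[
  \mathbb E_{\vartheta_{f_\tau}^N}[\cL_N\psi_\tau]=N^2\sum_{x,y\in\T_N^d}p(y-x)\,[h_\tau(\tfrac{y}{N})-h_\tau(\tfrac{x}{N})]\,\mathbb E_{\vartheta_{f_\tau}^N}[\psi_\tau(\eta+e_x)],
\]
$\eta+e_x$ denoting $\eta$ with one extra particle at $x$. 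Taylor‑expanding the discrete Laplacian — the first‑order term drops since $\gamma=0$, the third‑order term by symmetry of $p$, and the remainder is $o(N^{-2})$ uniformly because $f_\tau\in C^3(\T^d)$ and $p$ has the assumed moments — together with the identity $\mathbb E_{\vartheta_{f_\tau}^N}[\psi_\tau(\eta+e_x)]=\mathbb E_{\vartheta_{f_\tau}^N}[\psi_\tau\,g(\eta_x)/h_\tau(x/N)]$, reduces the integrand, modulo the collected $o(N^{-2})$ errors, to
\[
  \sum_{x\in\T_N^d}\frac{\mathtt A:\nabla^2 h_\tau(x/N)}{h_\tau(x/N)}\;\mathrm{Cov}_{\vartheta_{f_\tau}^N}\!\big(\psi_\tau,\;g(\eta_x)-\sigma'(f_\tau(\tfrac{x}{N}))\,\eta_x\big),
\]
the means having cancelled ($\sum_x$ of the coefficients vanishes by periodicity, and $\mathbb E_{\vartheta_{f_\tau}^N}[g(\eta_x)]=h_\tau(x/N)$, $\mathbb E_{\vartheta_{f_\tau}^N}[\eta_x]=f_\tau(x/N)$).

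The heart of the proof, and its main obstacle, is this covariance sum. The crucial algebraic fact is that its \emph{linear‑in‑$\eta_x$} content vanishes: a fluctuation–dissipation identity $\mathrm{Cov}_{n_\lambda}(g(k),k)=\lambda=\sigma'(R(\lambda))\,\mathrm{Var}_{n_\lambda}(k)$ — obtained by differentiating $\mathbb E_{n_\lambda}[g(k)]=\lambda$ and $\mathbb E_{n_\lambda}[k]=R(\lambda)$ in $\lambda$ — shows $g(\eta_x)-\sigma'(f_\tau(x/N))\eta_x$ is $\vartheta_{f_\tau}^N$‑orthogonal to $\eta_x$, so only the deviation of $k\mapsto\mathbb E_{\vartheta_{f_\tau}^N}[\psi_\tau\mid\eta_x=k]$ from its best affine fit — its ``curvature'' in the single variable $\eta_x$ — enters. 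The bare Lipschitz bound gives this curvature only as $\mathcal O(N^{-d})$, which sums to a useless $\mathcal O(1)$; the required improvement is the \emph{jump distance estimate}, a second‑order version of Lemma~\ref{lem:jumpdistance estim}: running the standard coupling for the extra discrepancies produced by $\eta\mapsto\eta+e_x$, one shows the relevant second increments of $e^{s\cL_N}\Phi$ are $\mathcal O(N^{-d})$ times a factor tending to $0$ as $sN^2\to\infty$, at a rate set by the recurrence on $\T_N^d$ of the discrepancy walks — a genuine power of $1/(sN^2)$ in $d=1$ but only a power of $1/\log(sN^2)$ in $d=2$, which is precisely why the argument is confined to $d\le2$ and produces dimension‑dependent rates. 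Splitting $\int_0^t\dd\tau$ at a cutoff $s=\delta$ (crude $\mathcal O(1)$ bound for $s<\delta$, jump distance bound for $s\ge\delta$), optimising $\delta$, and combining with the $o(N^{-2})$ Taylor errors, produces $I_t^N=\mathcal O(N^{-1/6+\varepsilon_0})$ when $d=1$ and $I_t^N=\mathcal O((\log N)^{-1/8+\varepsilon_0})$ when $d=2$, the $\varepsilon_0$ absorbing logarithmic losses and the non‑sharp balance of the cutoff. I expect the quantitative jump distance estimate, and its balancing against the cutoff, to be the technically demanding step; the rest is bookkeeping with the stated regularity of $f$ and moments of $p$.
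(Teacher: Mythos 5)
Your Step 1 is sound and, after unwinding, coincides with the paper's site-by-site computation: the integration by parts $\mathbb E_{n_\lambda}[g(k)F(k-1)]=\lambda\,\mathbb E_{n_\lambda}[F(k)]$ and the Taylor expansion of the discrete Laplacian (error $\cO(N^{-1})$ per site, not $o(N^{-2})$, but harmless) reduce the integrand to $\sum_x c_{\tau,x}\,\mathrm{Cov}_{\vartheta_{f_\tau}^N}\big(\psi_\tau,\,g(\eta_x)-\sigma'(f_{\tau,x})\eta_x\big)$, which is exactly the paper's $\tilde A^N_{\tau,x}$ tested against $\tilde\Phi_{t,\tau}$; the fluctuation--dissipation identity $\mathrm{Cov}_{n_\lambda}(g,k)=\lambda=\sigma'(R(\lambda))\,\mathrm{Var}_{n_\lambda}(k)$ is also correct. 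From there, however, you abandon the paper's route (block averaging, flattening of the local Gibbs measure, Poincar\'e inequality on $\ell$-boxes, equivalence of ensembles) in favour of a single-site orthogonality argument, and the step everything then rests on --- a ``second-order jump distance estimate'' showing that the discrete second increments $\psi_\tau(\eta+2e_x)-2\psi_\tau(\eta+e_x)+\psi_\tau(\eta)$ are $o(N^{-d})$ once $sN^2$ is large --- is asserted, not proved, and is not available in the paper.

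This gap is not cosmetic. Lemma~\ref{lem:jumpdistance estim} and the two ``methods'' of Step~4 control \emph{first} differences under \emph{mass-preserving} jumps $\eta\mapsto\eta^{zz'}$: the discrepancy then consists of one surplus and one deficit particle whose difference walk is recurrent in $d\le 2$, so they annihilate and the difference vanishes outside a small event. Your perturbation $\eta\mapsto\eta+e_x$ changes the total mass, so the discrepancy can never annihilate; the first difference $\psi_\tau(\eta+e_x)-\psi_\tau(\eta)$ remains of order $N^{-d}$ for all times, and smallness of the \emph{second} difference would have to come from a different mechanism (insensitivity of the response to one added particle to the presence of a second one), for which recurrence gives no handle: the two tagged particles evolve on backgrounds that themselves differ by a particle, and their meeting does not make the two first differences cancel beyond their common $\cO(N^{-d})$ size. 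Moreover your final rates are pattern-matched rather than derived: the exponents $N^{-1/6}$ and $(\ln N)^{-1/8}$ in the paper arise from a three-way optimisation over the block size $\ell$ (flattening error $\ell^{3d/2+1}N^{-1}\log(\cdot)$, Dirichlet-form/random-walk error $\ell N^{-1/2}\ln N$ resp.\ $\ell(\ln N)^{-1/4}$, equivalence-of-ensembles error $\ell^{-d/2+0}$), and a single time cutoff $\delta$ cannot reproduce them without specifying the (missing) decay of the second increments. To complete the argument you must either prove that second-difference estimate --- a genuinely new statement, strictly stronger than anything in the paper --- or reintroduce the block decomposition, where the Poincar\'e inequality converts the problem into a Dirichlet form of first differences under jumps, which the jump-distance and random-walk arguments do control.
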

 

 
\subsection{Structure of the proof}

We break the proof into six steps.
\begin{enumerate}
\item Site-by-site error calculation.
\item Introduction of an intermediate scale.
\item Projection on the local equilibrium.
\item Local entropy production estimate.
\item Local limit theorem at the local equilibrium.
\item Synthesis of the error terms.
\end{enumerate}

\subsection{Step 1. Site-by-site error calculation}

We compute 
\begin{equation*}
  \cL_N ^*G^N_\tau  - \cL_\infty^* G^N_\tau =
  \left( \sum_{x \in \T^d_N} A_{\tau,x} ^N \right) G^N_\tau
\end{equation*}
where the site-by-site consistency coefficient $A_{\tau,x}^N$
depends on the model. 
We denote, given a time-dependent function
$f$ on the torus $\T^d$, $f_{\tau,x} := f_\tau(x/N)=f(\tau,x/N)$.
For the specific ZRP model of this proof, it is given by
\begin{align*}
  A_{\tau,x} ^N
  & := N^2 \sum_{y \in \T^d_N} p(y-x) g(\eta_x) \left(
    \frac{\sigma(f)_{\tau,y}}{\sigma(f)_{\tau,x}} -1 \right) -
    \left( \eta_x - f_{\tau,x} \right)
    \frac{\sigma'(f)_{\tau,x}}{\sigma(f)_{\tau,x}}
    \mathcal L_\infty(f)_{\tau,x} \\
  & = \left( \frac{g(\eta_x)}{\sigma\left( f_\tau\left(\frac{x}{N}\right)
    \right)} - \left( \eta_x - f_\tau\left(\frac{x}{N}\right) \right)
    \frac{\sigma'\left( f_\tau\left(\frac{x}{N}\right)
    \right)}{\sigma\left( f_\tau\left(\frac{x}{N}\right)
    \right)} \right)  \mathbf{L}_\infty (f_\tau)\left(\frac{x}{N}\right) +
    g(\eta_x) R_\tau ^N(x)
\end{align*}
with a remainder $R_\tau^N(x)$ estimated as follows:
\begin{align*}
  R_\tau^N(x)
  & := \mathtt A : \nabla_x ^2 [\sigma(f_\tau)]\left(\frac{x}{N}\right)
    - N^2 \sum_{y \in \T^d _N}  p(y-x) \left[ \sigma\left( f_\tau \left(
    \frac{y}{N} \right) \right) - \sigma\left( f_\tau \left(
    \frac{x}{N} \right) \right) \right] \\
  & = \mathtt A : \nabla_x ^2 [\sigma(f_\tau)]\left(\frac{x}{N}\right)
    - N^2 \sum_{y \in \T^d _N}  p(y-x) \Bigg[ \nabla
    \left[ \sigma\left( f_\tau \right) \right] \left(
    \frac{x}{N} \right)\frac{y-x}{N} \\
  & \hspace{5cm} + \frac1{2 N^2}
    \nabla^2 \left[ \sigma \left( f_\tau \right) \right] \left(
    \frac{x}{N} \right) : (y-x)^{\otimes 2} + \cO
    \left( \frac{\| \nabla f_\tau \|_{C^2}}{N^3} \right) \Bigg] \\
  & = \cO \left( \frac{\| \nabla f_\tau \|_{C^2}}{N} \right) \leq \cO \left( \frac{ e^{-C\tau}}{N} \right) 
\end{align*}
where we have used the identities
\begin{equation*}
  \sum_{y \in \T^d _N}  p(y-x) (y-x) = \gamma =0, \qquad
  \frac12 \sum_{y \in \T^d _N}  p(y-x) (y-x)^{\otimes 2} = \mathtt A.
\end{equation*}

The conservation of mass implies
\begin{equation*}
  \int_{\sX_N} \left( \sum_{x \in \T^d_N} A_{\tau,x} ^N \right)
  G^N_\tau \dd \vartheta_{f_\infty} ^N = \int_{\sX_N} \left( \sum_{x \in
      \T^d_N} A_{\tau,x} ^N \right) \dd \vartheta_{f_\tau} ^N =0,
\end{equation*}
thus we may remove a constant: we remove the average of $\Phi_{t-\tau} := e^{(t-\tau)\cL_N} \Phi$ with respect to the local equilibrium measure and replace $\Phi_{t-\tau}$ by 
\begin{equation*}
  \tilde \Phi_{t,\tau} := e^{(t-\tau)\cL_N} \Phi - \int_{\sX_N} 
  \left( e^{(t-\tau)\cL_N} \Phi \right) \dd \vartheta_{f_\tau}
  ^N = e^{(t-\tau)\cL_N} \left[ \Phi - \left( \int_{\sX_N} \Phi
      \dd \vartheta_{f_\tau}^N \right) \right].
\end{equation*}
This results in the estimate
\begin{align}
  \nonumber
  I_t ^N
  & = \int_0 ^t \int_{\sX_N} \Phi_{t,\tau}(\eta) \left(
    \sum_{x \in \T^d_N} A^N_{\tau,x} \right) \dd
    \vartheta_{f_\tau}^N \\
  \nonumber
  & = \int_0 ^t \int_{\sX_N} \tilde \Phi_{t,\tau}(\eta) \left(
    \sum_{x \in \T^d_N} A^N_{\tau,x} \right) \dd
    \vartheta_{f_\tau}^N \\
  \label{eq:I-inter1-zrp}
  & = \int_0 ^t \int_{\sX_N} \tilde \Phi_{t,\tau}(\eta) \left(
    \sum_{x \in \T^d_N} \tilde A^N_{\tau,x} \right) \dd
    \vartheta_{f_\tau} ^N + \cO\left(\frac{1}{N}\right)
\end{align}
where $\tilde A^N_{\tau,x}$ is defined as follows (we have added
terms independent of $\eta$ thanks to the zero average condition
on $\tilde \Phi_{t,\tau}$ with respect to the measure
${\rm d}\vartheta_{f_\tau}$):
\begin{equation*}
  \tilde A^N_{\tau,x} :=  \left\{ g(\eta_x) - \sigma\left(
      f_\tau\left(\frac{x}{N}\right) \right) - \sigma'\left(
      f_\tau\left(\frac{x}{N}\right) \right) \left[ \eta_x -
      f_\tau\left(\frac{x}{N}\right) \right] \right\}
  \frac{ \mathbf{L}_\infty (f)\left(\frac{x}{N}\right)}{\sigma\left(
      f_\tau\left(\frac{x}{N}\right) \right)}.
\end{equation*}

Note that $\tilde A^N_{\tau,x}$ now also has zero average against
${\rm d} \vartheta_{f_\tau} ^N$. We have used the estimate on
$R^N_\tau(x)$ and the Lipschitz bound
$[\Phi_{t,\tau}]_{\Lip(\sX_N)} \le 1$ (from the microscopic
stability) to get
\begin{align*}
  & \sum_{x \in \T^d_N} \int_0 ^t R^N_\tau (x) \left(
    \int_{\sX_N} \tilde \Phi_{t,\tau}(\eta) g(\eta_x)
    \dd \vartheta_{f_\tau} ^N \right) \dd \tau \\
  & \lesssim [\Phi_{t,\tau}]_{\Lip(\sX_N)}   
   \sum_{x \in \T^d_N} \int_0 ^t 
    \left(  \int_{\sX^2} R^N_\tau (x) \frac{|\eta_x-\zeta_x|}{N^d} g(\eta_x)    \dd n_{f_{\tau,x}} ^2(\eta_x,\zeta_x) \right) \lesssim \frac{1}{N}.
\end{align*}

\subsection{Step 2. Introduction of an intermediate scale}

We consider an intermediate scale $\ell < N$ so that $\ell$
divides $N$. This scale will be
chosen later in terms of $N$. We then form sub-sums over a
covering of $\T^d_N$ by non-overlapping cubes of size
$\ell \in \{1,\dots,N\}$. Let $\cR^\ell_N \subset \T^d_N$ be the
net of centres of these non-overlapping cubes denoted 
\begin{equation*}
  \cC^\ell _x := \{ y \in \T^d_N \ : \ \| x - y \|_\infty \le \ell
  \} \quad \text{for} \quad x \in \cR^\ell_N.
\end{equation*}
Then we rearrange~\eqref{eq:I-inter1-zrp} into
\begin{align*}
  I_t ^N
  & = \sum_{x \in \cR_N^\ell}  \int_0 ^t \int_{\sX_N} \tilde
    \Phi_{t,\tau}(\eta) \left( \sum_{x' \in \cC_x ^\ell} \tilde
    A^N_{\tau,x'} \right) \dd \vartheta_{f_\tau} ^N +
    \cO\left(\frac{1}{N}\right) \\
  & = (2\ell+1)^d \sum_{x \in \cR_N^\ell}  \int_0 ^t \int_{\sX_N}
    \tilde \Phi_{t,\tau}(\eta) \hat A^N_{\tau,x} \dd \vartheta_{f_\tau} ^N +
    \cO\left(\frac{1}{N}\right) + \cO \left(
    \frac{\ell }{N}
    \right)
\end{align*}
where $\hat A^N_{\tau,x}$ is defined as follows (all the sums contained in this term are normalised):
\begin{equation*}
  \hat A^N_{\tau,x} :=  \left\{ \langle g(\eta) \rangle_{\cC_x ^\ell} -
    \sigma\left( f_\tau\left(\frac{x}{N}\right) \right) -
    \sigma'\left( f_\tau\left(\frac{x}{N}\right) \right) \left[
      \langle \eta \rangle_{\cC_x ^\ell} - f_\tau\left(\frac{x}{N}\right)
    \right] \right\}
  \frac{ \mathbf{L}_\infty (f)\left(\frac{x}{N}\right)}{\sigma\left(
      f_\tau\left(\frac{x}{N}\right) \right)}
\end{equation*}
with the following notation for the local average
\begin{equation*}
  \langle F(\eta) \rangle_{\cC_x^\ell} := \frac{1}{(2 \ell + 1)^d}
  \sum_{x' \in \cC_x^\ell} F\left(\eta_{x'}\right) \quad \text{ for
  } F=F(\eta_x).
\end{equation*}

Note that $\hat A^N_{\tau,x}$ only depends on values of $\eta$ in
the box $\cC_x ^\ell$. 
From now on ${\bf P}_{\cC}$ denotes the marginal projection onto the
sites in $\cC$ with respect to  $\vartheta_{f_\tau}$:
\begin{align} \label{def: ell projection}
  {\bf P}_{\cC_x ^{\ell}} \tilde \Phi_{t,\tau} := \int_{\{\eta_y \in
  \sX, \; y \not \in \cC_x ^{\ell}\}} \tilde \Phi_{t,\tau}(\eta)
  \prod_{y \not \in \cC_x ^{\ell}} \dd n_{f_{\tau.y}}(\eta_y). 
\end{align}

We have to estimate a new error term when replacing the local
average of $\tilde A$ by $\hat A$. Since $\tilde \Phi_{t,\tau}$
has zero average, this error term reduces to
\begin{equation*}
  \cE_A = \langle \eta \rangle_{\cC_x^\ell} \sum_{y \in \cC_x^\ell} \left[
    \sigma\left( f_\tau\left(\frac{y}{N}\right) \right) -
    \sigma\left( f_\tau\left(\frac{x}{N}\right) \right)
  \right] = \cO \left( \frac{\ell}{N} e^{-C\tau} \right)
  \langle \eta \rangle_{\cC_x^\ell}
\end{equation*}
where the exponentially small in time term appears due to the decay of the Lipschitz norm of $f_\tau$. Then, we use the Lipschitz bound on $\Phi$ to get
\begin{equation}
  \label{eq:lip-average-phi}
  \left| {\bf P}_{\cC_x ^{\ell}}  \tilde{\Phi}_{t, \tau} \right| \leq N^{-d} \int_{\zeta \in \sX^{\T^d_\ell}} \sum_{z \in \cC_x ^{\ell}}|\eta_z - \zeta_z| \dd \vartheta_{f_\tau} ^{N,\cC_x ^\ell} 
\end{equation}
where $\zeta$ denotes the variables in $\cC_x^\ell$ used in the average subtracted from $\Phi$ in the definition of $\tilde \Phi$, and $\vartheta_{f_\tau} ^{N,\cC_x ^\ell}$ denotes the restriction of the product measure $\vartheta_{f_\tau} ^N$ to $\cC_x ^\ell$. This yields
\begin{align*}
  & \sum_{x \in \cR_N^\ell}  \int_0 ^t \int_{\sX_N} \tilde
    \Phi_{t,\tau}(\eta) \cE_A \dd \vartheta_{f_\tau} ^N \\
  & \lesssim \frac{\ell}{N^{d+1}} \sum_{x \in \cR_N^\ell}  \int_0 ^t \int_{\sX_\ell^2} \left( \sum_{z \in \cC_x ^{\ell}}|\eta_z^{(\ell)} - \zeta_z ^{(\ell)} | \right)      \langle \eta \rangle_{\cC_x} ^\ell \dd \left( \vartheta_{f_\tau} ^{N,\cC_x ^\ell}    \right)^{\otimes 2}\left(\eta^{(\ell)},\zeta^{(\ell)}\right) \dd \tau \lesssim \frac{\ell }{N}e^{-Ct}
\end{align*}
and $\eta^{(\ell)}$ denotes a blind variable in $\sX_\ell$ and
$\vartheta_{f_\tau} ^{N,\cC_x ^\ell}$ denotes the restriction of
the product measure $\vartheta_{f_\tau} ^N$ to $\cC_x ^\ell$. Note
that the average of $\hat A^N _{\tau,x}$ against
${\rm d} \vartheta_{f_\tau} ^N$ is not zero anymore but similar
estimates can be used to prove it is small:
\begin{equation}
  \label{eq:averagehatA}
  \int_{\sX_N} \hat A^N_{\tau,x} \dd\vartheta_{f_\tau} ^N =
  \cO \left( \frac{\ell}{N} e^{-C\tau} \right).
\end{equation}

We then ``flatten'' the local equilibrium measure locally in each $\ell$-cube with error
\begin{align} \label{eq:decomp-consis}
  \nonumber
  I_t ^N
  & = (2\ell+1)^d \sum_{x \in \cR_N^\ell}  \int_0 ^t \int_{\sX_N}
    \tilde \Phi_{t,\tau}(\eta) \hat A^N_{\tau,x} \dd
    \vartheta_{f_\tau} ^N \\
    & = 
    (2\ell+1)^d \sum_{x \in \cR_N^\ell}  \int_0 ^t \int_{\sX_\ell}
    \tilde \Phi_{t,\tau}(\eta) \hat A^N_{\tau,x} \dd
    \vartheta_{f_{\tau,x}} ^\ell + \cO \left(
    \frac{\ell^{\frac{3d}{2}+1}}{N} \ln \left( \frac{N }{\ell^{d+1}} \right)       \right)
\end{align}
where $\vartheta_{f_{\tau,x}} ^\ell$ is the constant equilibrium measure in the $\ell$-box with fixed parameter $f_\tau(x/N)$ at every point of the box. In the second line of \eqref{eq:decomp-consis} we have used the regularity of $f_\tau$ and the finite exponential moments. The proof of this error is not complicated but quite technical, let us give more details. Indeed we have 
\begin{align*} 
  & \sum_{x \in \cR_N^\ell}\int_0 ^t \int_{\sX_N} \left({\bf P}_{\cC_x ^{\ell}} \tilde \Phi_{t,\tau} \right)  \hat A^N_{\tau,x} \left[  \dd \vartheta_{f_\tau}
  ^{N,\cC^{\ell}_x} -  \dd  \vartheta_{f_{\tau,x}} ^{\ell}  \right] \dd \tau \\
  &\lesssim \frac{\ell^d}{N^d} \sum_{x \in \cR_N^\ell} \int_0 ^t \int_{\sX_N}
    \left\vert \hat A^N_{\tau,x} \right\vert \langle \eta \rangle_{\cC_x ^\ell}  \left\vert  \dd \vartheta_{f_\tau} ^{N,\cC^{\ell}_x} -  \dd  \vartheta_{f_{\tau,x}} ^{\ell}  \right\vert 
    \dd \tau \\
  & \lesssim  \int_0 ^t \| \hat{A}^N_{\tau,x} \|_{L^2(\dd  \vartheta_{f_{\tau,x}}
  ^{\ell} )} \left[  \frac{\ell^{d+1} }{N} \ln \left( \frac{N }{\ell^{d+1}} \right) \right] e^{-C\tau}  \dd \tau
  \\ & \lesssim \ell^{-d/2} \left[  \frac{\ell^{d+1} }{N} \ln \left( \frac{N }{\ell^{d+1}} \right) \right] = \frac{\ell^{d/2+1}}{N} \ln \left( \frac{N }{\ell^{d+1}} \right).
\end{align*}
In the first line we have used the Lipschitz bound on $\Phi$ and moments on the Gibbs measure to estimate $|{\bf P}_{\cC_x ^{\ell}} \tilde \Phi_{t,\tau}| \lesssim \ell^d N^{-d} \langle \eta \rangle_{\cC^\ell_x}$. In the last line we have used the estimate $\| \hat{A}^N_{\tau,x}\|_{L^2(\dd \vartheta^\ell_{f_{\tau, x}})} \lesssim e^{-C\tau} \ell^{-d/2}$ following from the product structure of the Gibbs measure, together with $\mathbf{L}_\infty (f_t)$ decreasing exponentially to zero as $t \to \infty$. In the second line, we have estimated the difference between the two measures in $L^2 ((\vartheta^\ell_{f_{\tau,x}})^{-1})$. We use
\begin{align*} 
  & \left\vert \frac{\dd \vartheta_{f_\tau} ^{N,\cC^{\ell}_x} }{\dd  \vartheta_{f_{\tau,x}} ^{\ell} } -1 \right\vert  = \left\vert \frac{\prod_{z \in \mathcal{C}_x^\ell}  \sigma ( f_{\tau,z} )^{\eta_z} Z( \sigma ( f_{\tau,x}  ))}{ \prod_{z \in \mathcal{C}_x^\ell} \sigma ( f_{\tau,x} )^{\eta_z}  Z( \sigma ( f_{\tau,z}))} -1 \right\vert \\
  & = \exp \left( \sum_{z \in \mathcal{C}_x^\ell} \eta_z \ln \left[ \frac{\sigma (f_{\tau,z})-\sigma (f_{\tau,x})}{\sigma (f_{\tau,x})} +1 \right] + \sum_{z \in \mathcal{C}_x^\ell} \ln \left[ \frac{Z(\sigma (f_{\tau,x}))}{Z(\sigma(f_{\tau,z}))}  \right]\right) -1 \\ 
  & \le \exp \left( \sum_{z \in \mathcal{C}_x^\ell} \eta_z \ln \left[ C'e^{-Ct} \frac{\ell}{N} + 1 \right] + \sum_{z \in \mathcal{C}_x^\ell} \ln \left[ \frac{Z( \sigma (f_{\tau,x}))}{Z(\sigma(f_{\tau,z}))}  \right]\right) -1 \\
  & \le \exp \left( C'e^{-Ct} \frac{\ell}{N} \sum_{z \in \mathcal{C}_x^\ell} \eta_z  + \sum_{z \in \mathcal{C}_x^\ell} \left\vert f_{\tau,z} - f_{\tau,x} \right\vert \right) -1 \le \exp \left( C' e^{-Ct}  \frac{\ell^{d+1}}{N} \left[ \langle \eta \rangle_{\cC_x ^\ell} +1 \right] \right) -1, 
\end{align*} 
where we used that the partition function $Z$ satisfies by construction
\begin{equation*}
  \ln \left(\frac{Z( \sigma( f_\tau(x/N)))}{Z( \sigma ( f_\tau(z/N)))} \right)  \le \left| \int_{\sigma(f_\tau(z/N))}^{\sigma(f_\tau(x/N))} \frac{\sigma^{-1}(h)}{h} \dd h \right| \lesssim \left\vert f_\tau(z/N) - f_\tau(x/N) \right\vert,
\end{equation*}
by using the lower bounds on $f_\tau$ and the regularity of $\sigma$. We deduce 
\begin{equation}\label{eq:flatten measure}
  \begin{split}  
    & \left( \int_{\sX_N} \left\vert \frac{\dd \vartheta_{f_\tau}  ^{N,\cC^{\ell}_x} }{\dd  \vartheta_{f_{\tau,x}} ^{\ell} } -1 \right\vert^2 \dd  \vartheta_{f_{\tau,x}}^\ell \right)^{1/2} \lesssim \left( \int_{\sX_N}  \left\vert \exp \left( C' \frac{\ell^{d+1}}{N} \left[ \langle \eta \rangle_{\cC_x ^\ell} +1 \right]  \right) -1\right\vert^2 \dd  \vartheta_{f_{\tau,x}}^\ell \right)^{1/2}\\
    & \lesssim \left( \int_{\sX_N} 1_{ \{ \langle \eta \rangle_{\cC_x^\ell} \leq M \} }  \left\vert \exp \left( C' \frac{\ell^{d+1}}{N} (M+1) \right) -1 \right\vert^2 \dd  \vartheta_{f_{\tau,x}}^\ell \right)^{\frac12} \\
    & \hspace{3cm} + \left( \int_{\sX_N} 1_{\{ \langle \eta \rangle_{\cC_x^\ell} >M \} }  \left\vert \exp \left( C' \frac{\ell^{d+1}}{N} \left[ \langle \eta \rangle_{\cC_x ^\ell} +1 \right] \right) -1 \right\vert^2 \dd  \vartheta_{f_{\tau,x}}^\ell \right)^{\frac12}
  \end{split}
\end{equation}
for some constant $C'>0$, with a cut-off $M = M(N)$ to be chosen later in terms of $N$. Assuming $\ell^{d+1} \le N$, the first term is of order $ \mathcal{O}\left( M \ell^{d+1} N^{-1} \right)$, while for the second term we use the exponential moment relation $\mathbb{E}_{ \vartheta_{f_{\tau,x}}^N } [ e^{\theta \eta_0} ] = \frac{Z( \sigma(f_{\tau,x}) e^{\theta})}{Z(\sigma(f_{\tau,x}))}$ for any $\theta \in \R$ (valid and finite under our assumptions on the jump rate $g$) to deduce
\begin{equation*}
  \mathbb{E}_{ \vartheta_{f_{\tau,x}}^N } \left[ e^{\theta \langle \eta \rangle_{\mathcal{C}_x^\ell} } \right] = \prod_{z \in \mathcal{C}_x^\ell}  \mathbb{E}_{  \vartheta_{f_{\tau,x}}^N } [ e^{\theta \ell^{-d}  \eta_z } ] =  \prod_{z \in \mathcal{C}_x^\ell} \frac{Z( \sigma(f_{\tau,z}) e^{\theta \ell^{-d}})}{Z(\sigma(f_{\tau,z}))}, 
\end{equation*}
and since 
\begin{equation*}
  \left| \ln \left(\frac{Z( \sigma( f_{\tau,z}))}{Z( \sigma(f_{\tau,z}) e^{\theta \ell^{-d}})} \right) \right| = \left| \int_{\sigma(f_{\tau,z}) e^{\theta \ell^{-d}} }^{\sigma(f_{\tau,z})} \frac{\sigma^{-1}(r)}{r} \dd r \right| \le C \left\vert 1-e^{\theta \ell^{-d}}  \right\vert, 
\end{equation*}
we obtain indeed, for a fixed $\theta$ and $\ell$ large enough so that $\theta\ell^{-d} \le 1$:
\begin{equation*}
 \mathbb{E}_{ \vartheta_{f_{\tau,x}}^N } \left[ e^{\theta \langle \eta \rangle_{\mathcal{C}_x^\ell} } \right] 
 \le \prod_{z \in \mathcal{C}_x^\ell} e^{C \left\vert 1-e^{\theta \ell^{-d}}  \right\vert} = \exp\left( C \sum_{z \in \mathcal{C}_x^\ell}\left\vert 1-e^{\theta \ell^{-d}}  \right\vert \right) \lesssim \exp\left( C \theta \right) <\infty.
\end{equation*}
Exploiting this finite exponential moment for the $\ell$-averages, and applying H\"{o}lder's inequality for some $\theta$ with conjugate $\theta'$, we estimate the second term in~\eqref{eq:flatten measure}: 
\begin{align*} 
  C'\left(\int_{\sX_N} 1_{\{ \langle \eta \rangle_{\cC_x^\ell} >M \} } \dd  \vartheta_{f_{\tau,x}}^\ell \right)^{\frac{1}{\theta'}} \leq C''  \left( \int_{\sX_N} 1_{\{ \langle \eta \rangle_{\cC_x^\ell} >M \}} e^{ - 2 \langle \eta \rangle_{\cC_x^\ell} } \dd  \vartheta_{f_{\tau,x}}^\ell \right)^{\frac{1}{2\theta'}} \leq C'' e^{-\frac{M}{\theta'}}
\end{align*} 
for some explicit constants $C'' > 0 $ depending on the limit solution and on the exponential moment bounds, but not on the scaling parameters $\ell, N$. Inserting this into \eqref{eq:flatten measure} we get 
\begin{equation}
  \label{eq:flatten measure2}
  \left( \int_{\sX_N} \left\vert \frac{\dd \vartheta_{f_\tau}          ^{N,\cC^{\ell}_x} }{\dd  \vartheta_{f_{\tau,x}}^{\ell} } -1       \right\vert^2 \dd  \vartheta_{f_{\tau,x}}^\ell \right)^{1/2} \lesssim M \frac{\ell^{d+1}}{N} + e^{-\frac{M}{2\theta'}}.
\end{equation} 
We finally choose $M:=2 \theta'\ln(N \ell^{-d-1})$, which shows that the error of flattening the local Gibbs measure measure is indeed of order $\ell^{d+1} N^{-1} \ln(N \ell^{-d-1})$. 

\subsection{Step 3. Projection on the local equilibrium}

Next, we remove and add the local equilibrium, i.e. the average
over all configurations with the same mass in the $\ell$-box:
\begin{align}
  \nonumber
  & \sum_{x \in \cR_N^{\ell}}  \int_0 ^t \int_{\sX_\ell}
   {\bf P}_{\cC_x ^{\ell}}  \tilde \Phi_{t,\tau} (\eta) \hat A^N_{\tau,x} \dd
    \vartheta_{f_{\tau,x}} ^\ell \\
  \nonumber
  & = \sum_{x \in \cR_N^{\ell}}  \int_0 ^t \int_{\sX_\ell} \left[
   {\bf P}_{\cC_x ^{\ell}}  \tilde \Phi_{t,\tau}(\eta) - \Pi_x ^\ell \ {\bf P}_{\cC_x ^{\ell}} \tilde \Phi_{t,\tau}(\eta) 
     \right] \hat
    A^N_{\tau,x} \dd \vartheta_{f_{\tau,x}}
    ^\ell \\
  \nonumber
  & \hspace{2.5cm} + \sum_{x \in
    \cR_N^{\ell}}  \int_0 ^t \int_{\sX_\ell}  \Pi_x ^\ell {\bf P}_{\cC_x ^{\ell}} \tilde \Phi_{t,\tau} (\eta)
      \hat A^N_{\tau,x}
    \dd \vartheta_{f_{\tau,x}} ^\ell =: (2\ell+1)^{-d} J_t^{N} + (2\ell+1)^{-d} \tilde J_t^{N}
\end{align}
where $\Pi_x ^{\ell} \varphi$ projects any function
$\varphi(\eta^{(\ell)})$ of $\eta^{(\ell)} \in \sX_\ell$ onto the
hyperplan
\begin{equation*}
  \Omega_m := \left\{ \eta^{(\ell)} \in \sX_\ell \, : \, \langle
    \eta^{(\ell)} \rangle_{\cC_z} = m \right\}
\end{equation*}
of constant mass $m$. The precise formula is
\begin{equation}
  \label{eq:pi-average}
  \Pi_x ^\ell \varphi(\eta^{(\ell)}) = [\Pi_x ^\ell \varphi](\langle
  \eta^{(\ell)} \rangle_{\cC_x ^\ell}) = \int_{\Omega_{\langle \eta^{(\ell)}
      \rangle_{\cC_x ^\ell}}} \varphi(\eta^{(\ell)})
  \frac{\dd \vartheta_{f_{\tau,x}}^\ell
    (\eta^{(\ell)})}{\vartheta_{f_{\tau,x}}^\ell(\Omega_{\langle
      \eta^{(\ell)} \rangle})}.
\end{equation}
Combined with~\eqref{eq:decomp-consis} we have therefore the decomposition
\begin{align}
  \label{eq:decomp-consis2}
  I_t ^N = J^N_t + \tilde J^N_t + \cO \left(
    \frac{\ell^{\frac{3d}{2}+1}}{N} \ln \left( \frac{N }{\ell^{d+1}} \right)
      \right).
\end{align}

\subsection{Step 4. Local coercivity estimate}

To estimate the first term $J^N_t$ we use the
Cauchy-Schwarz inequality and the Poincaré inequality in the cube $\cC_x^\ell$, see \cite{MR1415232}. 
(The constant in
the Poincaré inequality is independent of the number of particles
in this cube and proportional to $\ell^2$.) 
\begin{equation}
\begin{split}
&J_t^{N} \leq (2\ell+1)^d \sum_{x \in \cR_N^{\ell}}  \int_0 ^t \int_{\sX_\ell} \left[
   {\bf P}_{\cC_x ^{\ell}} \tilde \Phi_{t,\tau} (\eta) - \Pi_x ^\ell \ {\bf P}_{\cC_x ^{\ell}} \tilde \Phi_{t,\tau} (\eta) 
     \right] \hat
    A^N_{\tau,x} \dd \vartheta_{f_{\tau,x}}^\ell \\ 
    &\lesssim
   \ell^{d} \sum_{x \in \cR_N^{\ell}} \int_0 ^t  \left[ \int_{\sX_\ell} \left[
    {\bf P}_{\cC_x ^{\ell}} \tilde \Phi_{t,\tau} (\eta) 
    - \Pi_x ^\ell \ {\bf P}_{\cC_x ^{\ell}} \tilde \Phi_{t,\tau} (\eta) 
     \right] ^2  \dd \vartheta_{f_{\tau,x}}^\ell  \right]^{1/2} \|  A^N_{\tau,x} \|_{L^2_{\vartheta^\ell_ {f_{\tau,x}}}}
        \dd \tau \\
        &\lesssim \ell^{d/2} \sum_{x \in \cR_N^{\ell}} \int_0 ^t e^{-C\tau}
        \left[ \int_{\sX_\ell} \left[
   {\bf P}_{\cC_x ^{\ell}}  \tilde  \Phi_{t,\tau} (\eta) - \Pi_x ^\ell \  {\bf P}_{\cC_x ^{\ell}} \tilde  \Phi_{t,\tau} (\eta) 
     \right] ^2  \dd \vartheta_{f_{\tau,x}}^\ell  \right]^{1/2}  \dd \tau, 
    \end{split}
\end{equation}
where in the second line we have used again the estimate $\| A^N_{\tau,x} \|_{L^2} \lesssim e^{-C\tau} \ell^{-d/2}$. We then apply
Poincaré inequality to get the upper bound
\begin{equation}
  J_t^{N} \leq \ell^{1+ d/2} \sum_{x \in \cR_N^{\ell}} \int_0 ^t  e^{-C\tau} \sqrt{\mathcal D_{\vartheta^\ell _{f_{\tau,x}}} \left( {\bf P}_{\cC_x ^{\ell}} \tilde \Phi_{t,\tau} \right)} \dd \tau.
\end{equation}
Here the Dirichet form $\mathcal D_{\vartheta^\ell _{f_{\tau,x}}}(\varphi)$ is the $L^2$ entropy production functional on the $\ell$-box with respect to the reference equilibrium measure $\vartheta^\ell _{f_{\tau,x}}$:
\begin{equation*}
  \mathcal D_{\vartheta^\ell _{f_{\tau,x}}}(\varphi) := \sum_{y_1,y_2 \in \cC_x^\ell} \int_{\sX_\ell} p(y_1-y_2) g(\eta_{y_1}) \left[ \varphi^{y_1 y_2} - \varphi  \right]^2 \dd \vartheta ^\ell _{f_{\tau,x}}.
\end{equation*}
The Cauchy-Schwarz inequality then implies
\begin{align} 
  J_t^{N} 
  & \leq \ell^{1+ d/2} \sum_{x \in \cR_N^{\ell}} \int_0 ^t 
    e^{-C\tau} \sqrt{ \mathcal D_{\vartheta^\ell _{f_{\tau,x}}} \left(\tilde \Phi_{t,\tau} \right)} \dd \tau \lesssim  \int_0 ^t \ell^{1+d/2} \frac{N^{d/2}}{\ell^{d/2}} e^{-C\tau} \sqrt{ \sum_{x \in \cR_N^{\ell}} \mathcal D_{\vartheta^\ell _{f_{\tau,x}}} \left( \tilde \Phi_{t,\tau} \right)} \dd \tau \nonumber \\
  & \lesssim \ell N^{d/2} \int_0 ^t e^{-C\tau} \left[ \sum_{y_1 \sim y_2 \in \cR_N^{\ell}} \int_{\sX_N} p(y_1-y_2) g(\eta_{y_1}) \left[ \tilde \Phi_{t,\tau} ^{y_1y_2}- \tilde \Phi_{t,\tau} \right]^2 \dd \vartheta^N  _{f_{\tau,x}} \right]^{1/2} \dd \tau  \nonumber\\
  & \lesssim \ell N^{d/2} \int_0 ^t e^{-C\tau} \sqrt{\mathcal D_{\vartheta^\ell _{f_{\tau,x}}} \left(\tilde \Phi_{t,\tau} \right)} \dd \tau.
    \label{eq:dirichlet form to estimate}
\end{align}

We now aim to establish sufficiently strong decay in $N$ on the Dirichlet form in order to obtain the decay for the term $J_t^N$. This estimate is in fact a control of regularity directly at the discrete level of the many-particle system. We propose two methods, which both have interest per se. The first one relies on Lemma~\ref{lem:jumpdistance estim} to prove that a ``jump distance'' that we introduce is non-increasing, but it is restricted to dimension $d=1$ and test functions $\Phi$ that are simple modulated sums. The second method is based on establishing a lower bound on the minimum number of jumps within a given time interval up to an exponentially small error of large deviation in $N$, thanks to the non-degeneracy condition on the jump rate.

We wish to estimate
\begin{equation*}
  \left\vert \Phi_t (\eta^{zz'}) - \Phi_t (\eta)\right\vert  = \left\vert \mathbb{E} \Big[\Phi_0 ((\eta^{zz'})^t) - \Phi_0 (\eta^t) \Big] \right\vert,
\end{equation*}
which appears in the Dirichlet energy. Employing Lemma \ref{lem:jumpdistance estim}, we note that $\operatorname{supp}(\pi_t^N) \subset \{ \| \eta - \zeta\|_{\ell^1} \leq 2 \}$ for all times: given that the mass is conserved and $\zeta^0=(\eta^0)^{zz'}$ and $\eta^0$ have the same mass, so do $\eta$ and $\zeta$ for all times in the support of $\pi_t$, and the above support condition implies that all pairs of configuration $(\eta,\zeta)$ in the support of $\pi_t$ satisfy $\zeta=\eta$ or $\zeta^t=(\eta^t)^{x_-x_+}$ for two sites $x_-, x_+ \in \T_N$ not necessarily neighbours (it is a \emph{generalised jump} so to speak). When the two points $x_-=x_+$ become equal, they remain so, thanks again to weak contraction of the support in Lemma~\ref{lem:jumpdistance estim}. 
\medskip

\noindent
\textit{Method 1: Monotonicity of the jump distance in dimension $1$.}
Assume $d=1$, $p$ to be the symmetric nearest-neighbour transition function, and consider $\Phi_0 = N^{-1}\sum_{x \in \T_N}\eta_x \phi_x$ with $\phi \in C^1(\T)$: such particular form of the initial test function is an assumption of ``discrete smoothness'' at the initial time, i.e. the projections of $\Phi_0$ have similar laws for nearby sites. These assumptions are restrictive, but  the stability estimate we prove is a strong stability on the level of the trajectories. Given $\eta,\zeta \in \sX_N$ with same mass $\sum_{x \in \T_N} \eta_x=\sum_{x \in \T_N} \zeta_x$, we define the \textit{jump distance} $J(\eta, \zeta)$ as the minimum number of jumps to transform the configuration $\eta$ into the configuration $\zeta$. For given two neighbouring points $z \sim z'$ on the lattice,  $J(\eta, \eta^{zz'}) = 1$, i.e. the configurations $\eta$ an $\eta^{zz'}$ are one jump apart. Given the initial coupling measure $\pi_0^N(\eta, \zeta):=\delta_{\eta=\eta}\otimes \delta_{\zeta=\eta^{zz'}}$ on $\sX_N^2$, we then use the standard coupling described above to define the coupling measure $\pi_t$ of $(\eta_t,\zeta_t)$ at later times. And we have the formula
\begin{align*}
  \left\vert \Phi_t (\eta^{zz'}) - \Phi_t (\eta)\right\vert
  & = \left\vert \mathbb{E} \Big[\Phi_0 \left((\eta^{zz'})^t\right) - \Phi_0 \left(\eta^t\right) \Big] \right| 
   = \left| \int_{\eta,\zeta \in \sX_N} \Big[\Phi_0(\zeta) - \Phi_0(\eta) \Big] \dd \pi_t(\eta,\zeta) \right| \\
  & = \left| \int_{\eta,\zeta \in \sX_N} N^{-1} \left[ \sum_{x \in \T_N} \phi_x \left( \eta _x - \zeta_x \right) \right] \dd \pi_t(\eta,\zeta) \right| \\
  & \lesssim N^{-2} \int_{\eta,\zeta \in \sX_N} J\left(\eta,\zeta\right) \dd \pi_t(\eta,\zeta)
\end{align*}
where in the last line we have used that $\eta$ can be connected to $\zeta$ by $J(\eta,\zeta)$ jumps between two neighbouring sites, and each of them costs an error $N^{-1}$ thanks to the $C^1$ regularity of $\phi$. Indeed for each jump $\zeta=\eta^{zz'}$ with $z \sim z'$:
\begin{equation*}
  \sum_{x \in \T_N} \phi_x \left( \eta - \eta^{zz'} \right) = \phi_{z'} - \phi_z = \mathcal O(N^{-1}).
\end{equation*}
We finally claim that
\begin{equation}
  \label{eq:claim-J}
  \int_{\eta,\zeta \in \sX_N} J\left(\eta,\zeta\right) \dd \pi_t(\eta,\zeta) \le \int_{\eta,\zeta \in \sX_N} J\left(\eta,\zeta\right) \dd \pi_0(\eta,\zeta) =2.
\end{equation}
In order to prove \eqref{eq:claim-J}, we use the remark above that 
$\zeta^t=(\eta^t)^{x_-x_+}$ for two sites $x_-, x_+ \in \T_N$ not necessarily neighbours, and in fact it follows from our definition of the jump distance that these two sites are exactly at distance $J(\eta,\zeta)$ on the lattice (where the lattice distance is the minimal number of edges to connect two points). We then differentiate in time
\begin{align*} 
    & \frac{\dd}{\dd t} \int_{\sX_N^2} J(\eta, \zeta) \dd \pi_t^N(\eta, \zeta)
    = \int_{\sX_N^2} \widetilde{\mathcal{L}}_N J(\eta, \zeta)\ \dd \pi_t^N(\eta, \zeta) \\
    & = N^2 \int_{\sX_N^2} \sum_{x\sim x'} p(x'-x) \left( g(\eta_x)\wedge g(\zeta_x) \right) \left[J(\eta^{xx'}, \zeta^{xx'}) - J(\eta, \zeta)\right] \dd \pi_t^N(\eta, \zeta) \\
    & \quad + N^2 \int_{\sX_N^2} \sum_{x\sim x'}p(x'-x) \left(g(\eta_x)- g(\zeta_x)\right)_+ \left[J(\eta^{xx'}, \zeta ) - J(\eta, \zeta)\right] \dd \pi_t^N(\eta, \zeta) \\
    & \quad + N^2 \int_{\sX_N^2} \sum_{x\sim x'}p(x'-x) \left(g(\zeta_x)- g(\eta_x) \right)_+ \left[J(\eta, \zeta^{xx'}) - J(\eta, \zeta)\right] \dd \pi_t^N(\eta, \zeta). 
\end{align*}
The first term of the right hand side is zero. The second term of the right hand side is nonzero only at sites $x$ where $\eta_x > \zeta_x$, i.e. at $x=x_-$ and the third term of the right hand side is nonzero only at sites $x$ where $\zeta_x > \eta_x$ i.e. at $x=x_+$. Therefore
\begin{align*}
  & \frac{\dd}{\dd t} \int_{\sX_N^2} J(\eta, \zeta) \dd \pi_t^N(\eta, \zeta) \\
  &  = \frac{N^2}{2} \int_{\sX_N^2} \left(g(\eta_{x_-})- g(\eta_{x_-}-1)\right)_+ \left[J\left(\eta^{x_-(x_--1)}, \zeta\right) + J\left(\eta^{x_-(x_-+1)},\zeta\right) - 2 J(\eta, \zeta)\right] \dd \pi_t^N(\eta, \zeta) \\
  & + \frac{N^2}{2} \int_{\sX_N^2} \left(g(\zeta_{x_+})- g(\zeta_{x_+}-1)\right)_+ \left[J\left(\eta, \zeta^{x_+(x_+-1)} \right) + J\left(\eta,\zeta^{x_+(x_++1)}\right) - 2 J(\eta, \zeta)\right] \dd \pi_t^N(\eta, \zeta).
\end{align*}
Note finally that either in dimension $d=1$ we have
\begin{equation*}
  J\left(\eta^{x_-(x_--1)}, \zeta\right) + J\left(\eta^{x_-(x_-+1)},\zeta\right) =
  J\left(\eta, \zeta^{x_+(x_+-1)} \right) + J\left(\eta,\zeta^{x_+(x_++1)}\right) = 2 J(\eta,\zeta)
\end{equation*}
since one direction decreases the jump distance by $1$ while the other direction increases it by $1$. We conclude that $|\Phi_t (\eta^{zz'}) - \Phi_t (\eta)| \lesssim N^{-2}$,  which implies
\begin{align}
  \label{eq: J_t^N error}
  J_t^N \lesssim \ell N^{d/2} \int_0^t e^{-C\tau} \sqrt{\mathcal{D}_{\vartheta_{f_{\tau,x}}} \left(\Phi_{t,\tau}\right)} \dd \tau \lesssim \mathcal{O}\left( \ell N^{-1}\right).
\end{align}
\smallskip

\noindent
\textit{Method 2: The random walk viewpoint.} We now consider the random points $x_-(t)$ and $x_+(t))$ as jump processes following symmetric random walks in a given random environment, with respective jump rates $(g(\eta_{x_-})- g(\eta_{x_-}-1))_+$ and $\left(g(\eta_{x_+}+1)- g(\eta_{x_+})\right)_+$, provided we consider $\eta$ as given. Then $S(t) = x_+(t)-x_-(t)$ also defines a symmetric random walk in a random environment. Note that $S$ stop when it hits zero, i.e. $x_+=x_-$. The idea is to use quantitatively the recurrence of random walks in dimension $1$ and $2$.

The first step consists in proving a lower bound on the number of jumps per unit of time for the random walk $S(t)$ as long as it has not hit zero (where it then stays forever). We use the assumptions made on $g$, in particular $0 < g_1^* \leq |g(k) - g(k+1)| \leq g_2^* < \infty$. We claim that there is $\beta \in (0,1)$ small enough, and constants $C_1,C_2>0$, all depending on $g_1^*$ and $ g_2^*$, so that 
\begin{equation}
  \label{eq:claim-jumps}
  \mathbb{P}\left(\big[\text{number of jumps up to time } t \big] \leq \beta tN^2\right) \leq C_1 e^{- C_2 tN^2}.
\end{equation}

Let us prove this claim~\eqref{eq:claim-jumps}. Let $(T_k)_{k\ge 0}$ be the random waiting times for the jumps of $S(t)$, which follow exponential distribution with rate $\lambda_k$, i.e. $T_k \sim \lambda_k e^{-\lambda_k t}1_{t\geq 0}$ with density function $h_k$. We define $\omega_n := \sum_{k=1}^{n} T_k$ the time of the $n$-th jump. Having less than $\beta tN^2$ jumps before time $t$ means that the time of the $\beta tN^2$-th jump is greater $t$, so the error we want to estimate is
\begin{equation}
  \label{eq1_number}
  \mathbb{P}( \omega_{\beta tN^2} > t) = \int_t^\infty (  h_1 \ast h_2 \ast \cdots \ast h_{\beta tN^2} ) (s) \dd s.
\end{equation} 
Since, after rescaling time by $N^2$, $N^2 \lambda_i \in [N^2g_1^*, N^2g_2^*]$, for $i=1,\dots,\beta tN^2$, the $ \mathbb{P}( \omega_{\beta tN^2} > t)$  is upper bounded by  
\begin{equation*}
  \int_t^\infty ( \underbrace{ \bar{h} \ast \bar{h} \ast \cdots \ast \bar{h} }_{\beta tN^2 \text{ times}} ) (s) \dd s
\end{equation*}
where $ \bar{h} = N^2 ( \max \lambda_i) e^{-N^2 (\min \lambda_i) t}1_{t\geq 0} =N^2 g_2^* e^{-N^2 g_1^* t}1_{t\geq 0}  $. We compute inductively
$$ (\bar{h})^{ \ast \beta tN^2} (s) = (N^2g_2^*)^{\beta tN^2} \frac{s^{\beta tN^2-1} }{(\beta tN^2-1)!} e^{-N^2g_1^*s}.$$
Inserting this into \eqref{eq1_number}, we have 
\begin{align}
  \label{eq: No of jumps_est1}
  \mathbb{P}( \omega_{\beta tN^2} > t)
  &\leq \int_t^\infty 
    (N^2g_2^*)^{\beta tN^2} \frac{s^{\beta tN^2-1} }{(\beta tN^2-1)!} e^{-N^2g_1^*s}
    \dd s \nonumber \\
  & = \left( \frac{g_2^*}{g_1^*} \right)^{\beta tN^2}
    \left[  (g_1^*N^2)^{\beta tN^2}\int_t^\infty 
    \frac{s^{\beta tN^2-1} }{(\beta tN^2-1)!} e^{-N^2g_1^*s}
    \dd s\right] \\
  & = \left( \frac{g_2^*}{g_1^*} \right)^{\beta tN^2} \frac{1}{(\beta tN^2-1)!} \int_{g_1^*tN^2}^\infty y^{\beta tN^2-1} e^{-y} \dd y 
\end{align}
by change of variable. Then given $L$ real and $K$ integer, both large, we use
\begin{align*}
  \frac{1}{K!} \int_L ^{+\infty} y^K e^{-y} \dd y
  & = \frac{1}{K!} \left| \partial_{\lambda}^K \left( \int_L ^{+\infty} e^{-\lambda y} \dd y \right)_{|\lambda=1} \right| = \frac{1}{K!} \left| \partial_{\lambda}^K \left( \frac{e^{-\lambda L}}{\lambda} \right)_{|\lambda=1} \right| \\
  & \le \frac{1}{K!} \sum_{k=0} ^K \left( \begin{matrix} K \\ k \end{matrix} \right) L^k e^{-L} \le \frac{1}{K!} (L+1)^K e^{-L}.
\end{align*}
Applying it with $L:=g_1^* t N^2$ and $K:=\beta t N^2-1$ gives
\begin{align*}
  \mathbb{P}( \omega_{\beta tN^2} > t) \lesssim \left( \frac{g_2^*}{g_1^*} \right)^{\beta tN^2} \frac{(g_1^* t N^2)^{\beta t N^2 -1}}{( \beta tN^2-1)!} e^{-g_1^* t N^2}
\end{align*}
which gives the claimed estimate by the Stirling formula.

We then decompose $J_t ^N = J_t^{\text{NR}} + J_t^{\text{slow}} + J_t^{\text{R}}$ into three events: (i) when the number of jumps before time $t$ is at least $\beta tN^2$ and the random walk $S_t$ has not returned to zero within this time interval; (ii) when the number of jumps before time $t$ is less than $\beta tN^2$; and (iii) when the random walk $S_t$ has returned to zero within this time. The last case (iii) is simple since in this region $|\Phi_0(\zeta^t)-\Phi_0(\eta^t)|=0$  because the two configurations are equal at time $t$ (and remain so for all subsequent times). The second case (ii) is small thanks to the estimate~\eqref{eq:claim-jumps}, which gives an exponentially small error in $N$. We finally consider the first case (i), when the random walk $S$ has jumped ``a lot'' but still not returned to the origin:
\begin{align*}
 & J_t^{\text{NR}} \lesssim \ell N^{d/2} \int_0^t  e^{-C\tau} \dd\tau  \\ 
  &  \sqrt{ \sum_{y_1 \sim y_2} \int_{\sX_N} g(\eta_{y_1}) \left| \mathbb{E} \left( 1_{x_+(t) \not = x_-(t)} 1_{\# \text{jumps} \ge \beta tN^2} \left[ \Phi_0 \left(\eta_{t-\tau}\right) - \Phi_0 \left(\eta_{t-\tau}^{x_-(t-\tau) x_+(t-\tau)} \right) \right] \right) \right|^2 \dd \vartheta_{f_{\tau, x}}^N} \\
  & \lesssim \ell \int_0^t  e^{-C\tau} \sqrt{N^{-d} \sum_{y_1 \sim y_2} \int_{\sX_N} g(\eta_{y_1}) \left| \mathbb{E} \left( 1_{x_+(t) \not = x_-(t)} 1_{\# \text{jumps} \ge \beta t N^2} \right) \right|^2 \dd \vartheta_{f_{\tau, x}}^N} \dd\tau  
\end{align*}
where we have used the Lipschitz property of $\Phi_{t-\tau}$ in order to extract the $N^{-2d}$ decay inside the square root, since the $\ell^1$ distance between $\eta_{t-\tau}$ and $\eta_{t-\tau}^{x_-(t-\tau) x_+(t-\tau)}$ is always $2$.

Then in dimension $d=1$, the probability of not returning to zero in $\beta (t-\tau)N^2$ jumps is $\mathcal O((t-\tau)^{-1/2} N^{-1})$, see~\cite[Prop~4.2.4, p.80]{Lawler_Limic_2010}. 
This would immediately give $J^{\text{NR}}_t \lesssim \ell N^{-1/2}$ if it was not for the factor $g(\eta_{y_1})$ in the integrand which is unbounded. We control it in the usual way by using exponential moments on the Gibbs measure, splitting $\eta_{y_1} \le M$ and $\eta_{y_1} > M$ and finally optimising $M=M(N) \sim \ln((t-\tau)N^2)$. We leave this easy calculation to the reader, which leads to
\begin{equation}
  \label{eq:1d-JtN}
  J^{\text{NR}}_t \lesssim \frac{\ell \ln N}{\sqrt{N}}.
\end{equation}

In dimension $d=2$, the probability of not returning to zero in $\beta (t-\tau)N^2$ jumps is $\mathcal O((\ln((t-\tau)N^2)^{-1})$, see~\cite[Prop~4.2.4, p.80]{Lawler_Limic_2010}. This would immediately give $J^{NR}_t \lesssim \ell (\ln N)^{-1/2}$ if it was not for the unbounded factor $g(\eta_{y_1})$ in the integrand. We control it again by using exponential moments on the Gibbs measure, splitting $\eta_{y_1} \le M$ and $\eta_{y_1} > M$ and finally optimising $M=M(N) \sim \sqrt{|\ln((t-\tau)N^2)|}$. We again leave this calculation to the reader, which leads to
\begin{equation}
  \label{eq:2d-JtN}
  J^{\text{NR}}_t \lesssim \frac{\ell}{(\ln N)^{1/4}}.
\end{equation}

\subsection{Step 5. Local limit theorem at the local equilibrium}

To control the term $\tilde J_t ^N$
in~\eqref{eq:decomp-consis2} we use the so-called \textit{equivalence of ensembles}. It is proved via the local limit theorem~\ref{theo:llt} applied in the box $\cC^\ell_x$ to the local equilibrium $\vartheta_{f_{\tau,x}} ^\ell$, which is a product identically distributed measure with one-site mass parameter $f_{\tau,x}=f(\tau,x/N)$ bounded from above and away from zero, and to the functions $g(\eta_{y_0})$ which only depends on the value of $\eta$ at some site $y_0 \in \cC_x^\ell$, and then perform the average over $y_0 \in \cC_x^\ell$ to deduce
\begin{equation}
  \label{eq:equiv-ens}
  \left[\Pi_x^\ell \langle g(\eta_\cdot) \rangle_{\cC^\ell_x} \right](m) 
  = {\bf E}_{\vartheta^\ell_m}\left[
    \langle g\left(\eta_\cdot \right) \rangle_{\cC^\ell_x} \right] + \cO\left(
    \ell^{-d} \right) = \sigma(m) + \cO\left(
    \ell^{-d+0} \right)
\end{equation}
when $m := \langle \eta \rangle_{\cC^\ell_x} \in [m_0,m_1]$ for some given $0<m_0<m_1<\infty$. We choose $m_0$ small enough and $m_1$ large enough so that the $\vartheta_{f_{\tau,x}} ^\ell$-probability that $m$ falls outside $[m_0,m_1]$ is exponentially small in $\ell$. Inserting this bound into our formula for $ \hat A^N_{\tau,x}$, we get
\begin{align*}
  & \Pi^\ell_x \hat A^N_{\tau,x}
    =  \left\{ \left[\Pi_x^\ell \langle g(\eta_\cdot) \rangle_{\cC^\ell_x} \right] -\sigma\left( f_\tau\left(\frac{x}{N}\right) \right) -\sigma'\left( f_\tau\left(\frac{x}{N}\right) \right) \left[\langle \eta \rangle_{\cC_x ^\ell} - f_\tau\left(\frac{x}{N}\right) \right] \right\}    \frac{\mathbf{L}_\infty (f)\left(\frac{x}{N}\right)}{\sigma\left(    f_\tau\left(\frac{x}{N}\right) \right)} \\
  & = \left\{ \sigma \left( \langle \eta \rangle_{\cC_x^\ell} \right) -    \sigma\left( f_\tau\left(\frac{x}{N}\right) \right) - \sigma'\left( f_\tau\left(\frac{x}{N}\right) \right) \left[ \langle \eta \rangle_{\cC_x ^\ell} - f_\tau\left(\frac{x}{N}\right) \right] \right\} \frac{\mathbf{L}_\infty (f)\left(\frac{x}{N}\right)}{\sigma\left( f_\tau\left(\frac{x}{N}\right) \right)} + \cO\left(  \ell^{-d+0} \right) \\
  &  = \cO \left( \left| \langle \eta^{(\ell)}\rangle_{\cC_x^\ell} - f_\tau \left(\frac{x}{N} \right) \right|^2 e^{-C \tau} \right) + \cO(\ell^{-d+0})
\end{align*}
(abusing slightly notation by incorporating the exponential small set $m \not \in [m_0,m_1]$ into the error). The Lipschitz regularity of $\Phi_{t-s}$ implies $\Pi_x ^\ell {\bf P}_{\cC_x ^{\ell}} \tilde \Phi_{t,\tau} = \cO(\ell^d N^{-d})$, since we recall that $\tilde \Phi_{t,\tau} := e^{(t-\tau)\mathcal{L}_N } \Phi-\mathbb{E}_{\vartheta_{f_\tau}^N} ( e^{(t-\tau)\mathcal{L}_N }\Phi)$. 
We however need a better control.

We claim that this Lipschitz regularity of $\Phi_{t-s}$ also implies a Lipschitz regularity of its averaged projection $\Pi_x ^\ell {\bf P}_{\cC_x ^{\ell}} \Phi_{t-s}$ with constant $\ell^d N^{-d}$, with respect to the local mass $\langle \eta^{(\ell)} \rangle_{\cC_x^\ell}$. Let us prove this fact. We reason in the $\ell$-box. Given $0 \le m \le m' <+\infty$, pick any pair of configuration $(\eta_0,\zeta_0)$ with $\langle \eta_0 \rangle_{\cC_x} =m$, $\langle \zeta_0 \rangle_{\cC_x} =m'$ and $\eta_0 \le \zeta_0$ (such configuration trivially exists since $m \le m'$). We then consider the initial coupling $\delta_{(\eta_0,\zeta_0)}$ on $\Omega_m \times \Omega_{m'}$ which has $\ell^1$ cost $m'-m$, and we evolve it along the flow of the coupling operator $e^{t \tilde \cL_N}$. The marginals respectively converge to $\vartheta^{\ell,m}$ and $\vartheta^{\ell,m'}$ (convergence to equilibrium of the original evolution). Since the evolution by the coupling operator satisfies our microscopic stability estimate, we have $W_1(\vartheta^{\ell,m},\vartheta^{\ell,m'})\le C_S(m'-m)$. An optimal coupling $\pi$ thus satisfies
\begin{equation*}
  \int_{\Omega_m \times \Omega_{m'}} \left( \ell^{-d} \sum_{z \in \cC^\ell_x} |\eta_z - \zeta_z| \right) \dd \pi(\eta,\zeta) \le C_S(m'-m)
\end{equation*}
and we compute
\begin{align*}
  & \left| 
  \Pi_x ^\ell {\bf P}_{\cC_x ^{\ell}} \Phi_{t-\tau}(m') - \Pi_x ^\ell {\bf P}_{\cC_x ^{\ell}}  \Phi_{t-\tau} (m) \right| \\
  & = \left| \int_{\Omega_{m'}} {\bf P}_{\cC_x ^{\ell}}\Phi_{t-\tau} (\zeta) \dd \vartheta^{\ell,m'}(\zeta)- \int_{\Omega_m}  {\bf P}_{\cC_x ^{\ell}} \Phi_{t-\tau} (\eta) \dd
    \vartheta^{\ell,m}(\eta) \right| \\
  & \le \int_{\Omega_m \times \Omega_{m'}} \left|
    {\bf P}_{\cC_x ^{\ell}} \Phi_{t-\tau} (\zeta) -
  {\bf P}_{\cC_x ^{\ell}}  \Phi_{t-\tau} (\eta) \right|
    \dd \pi(\eta,\zeta) \\
  & \le \frac{\ell^d}{N^d} \int_{\Omega_m \times \Omega_{m'}}
   \left( \ell^{-d} \sum_{z \in \cC^\ell_x} |\eta_z - \zeta_z| 
   \right) 
    \dd \pi(\eta,\zeta) \le C_S \frac{\ell^d}{N^d} |m'-m|.
\end{align*}

Using then the almost zero-average
property~\eqref{eq:averagehatA} of $\hat A^N _{\tau,x}$, we write
\begin{align}
  \nonumber
  & \tilde J_t ^N = (2\ell+1)^d \sum_{x \in \cR_N^{\ell} } \int_0 ^t \int_{\sX_\ell} \left( \Pi_x ^\ell {\bf P}_{\cC_x ^{\ell}}\tilde \Phi_{t,\tau} \right) \left( \Pi_x ^\ell \hat A^N_{\tau,x} \right) \dd   \vartheta_{f_{\tau,x}}^\ell \\
  \nonumber
  & \lesssim \ell^d \sum_{x \in \cR_N^{\ell}}  \int_0 ^t \int_{\sX_\ell} \left| \left( \Pi_x ^\ell {\bf P}_{\cC_x ^{\ell}}  \tilde\Phi_{t,\tau}  \right)\left(\langle \eta^{(\ell)} \rangle_{\cC_x^\ell}\right) - \left( \Pi_x ^\ell  {\bf P}_{\cC_x ^{\ell}} \tilde \Phi_{t,\tau} \right) \left(  f_\tau\left(\frac{x}{N}\right) \right) \right| \left( \Pi^\ell_x \hat A^N_{\tau,x} \right) \dd \vartheta_{f_{\tau,x}} ^\ell  +  \frac{\ell^{d+1}}{N}
  \\ \label{eq:estimJtilde}
  & \lesssim \frac{\ell^{2d+0}}{N^d} \sum_{x \in \cR_N^{\ell}} \int_{\sX_{\ell}} \left| \langle \eta^{(\ell)} \rangle_{\cC_x^\ell} - f_\tau \left(\frac{x}{N} \right) \right|^3 e^{-C\tau}  \dd \vartheta_{f_{\tau,x}} ^\ell + \frac{\ell^{d+1}}{N}   \lesssim \ell^{-\frac{d}{2}+0} + \frac{\ell^{d+1}}{N}, 
\end{align}
where for the last term we employed the local law of large numbers in the $\ell$ box, to extract an $\ell^{-3d/2}$ decay from the integrand. We note that Step 5 holds for any dimension $d\geq 1$.

\subsection{Step 6. Synthesis of the errors}

In dimension $d=1$, by combining~\eqref{eq:decomp-consis2}-\eqref{eq:1d-JtN}-\eqref{eq:estimJtilde}, we get
\begin{align*}
  I_t ^N
  & \lesssim \cO \left( \frac{\ell^{\frac{3d}{2}+1}}{N} \ln \left( \frac{N }{\ell^{d+1}} \right) \right) +\cO \left( \frac{\ell}{\sqrt{N}} \ln N \right) +\cO\left(  \ell^{-\frac{d}{2}+0} \right) \\
  & \lesssim \cO \left( \frac{ \ell}{\sqrt{N} } \ln N \right) +\cO\left( \ell^{-\frac{d}{2}+0} \right)  
\end{align*}
and after optimising over $\ell$, choosing $\ell = N^{1/3}$, the final consistency rate is $I_t^N  = \mathcal{O} (N^{-1/6+0} \ln(N) )$. In dimension $d=2$, by combining~\eqref{eq:decomp-consis2}-\eqref{eq:2d-JtN}-\eqref{eq:estimJtilde}, we get 
\begin{align*}
  I_t ^N
 & \lesssim  \cO \left( \frac{\ell^{\frac{3d}{2}+1}}{N} \ln \left( \frac{N }{\ell^{d+1}} \right) \right) +\cO \left( \frac{\ell}{ [ \ln(N) ]^{1/4}}  \right) +\cO\left( \ell^{-\frac{d}{2}+0} \right) \\ 
  & \lesssim \cO \left(  \frac{\ell}{ [ \ln(N) ]^{1/4}} \right) +\cO\left( \ell^{-\frac{d}{2}+0} \right). 
\end{align*}
We optimise by choosing $\ell = [ \ln(N) ]^{1/8}$ so that $I_t^N= \mathcal{O}([ \ln(N) ]^{-1/8+0})$.

\begin{remark}  
  In the case of dimension $1$, when $\Phi_0$ has the additional structure $\Phi_0(\eta)=N^{-1} \sum_{x \in \T_N} \eta_x \phi_x$ with $\phi \in C^1(\T)$, (1) by using a more general Poincaré inequality valid on the \textbf{non-flattened measure} one gets rid altogether of the error~\eqref{eq:decomp-consis2}, (2)  by using the ``method $1$'' one gets the better rate $\cO(\ell \ln N/N)$ in~\eqref{eq:1d-JtN}, (3) by using higher-order versions of the local limit theorem~\ref{theo:llt} one expect to get the better rate $\cO(\ell^{-d})$ in~\eqref{eq:estimJtilde}, modulo a term that is killed by orthogonality, which would give (asymptotically) the optimal rate $\cO(N^{-1/2+0})$. Such improvement will be considered in further work. Note that we do not know if a jump distance monotonicity (as in ``method $1$'') holds in dimension $d \ge 2$.
\end{remark}
 
\section{Entropic consistency}
\label{sec:entropy}

The $L^1_t$ estimate of $\mathsf R_1$ for the ZRP was established in the previous section. We now prove the $L^\infty_t$ estimate over $\mathsf R_2$, and then~\textbf{(H4)}, for the ZRP.

\subsection{Estimate on $\mathsf R_2$ for the ZRP}
We consider the two functions in~\eqref{eq:lip-test} that are to be estimated in the Lipschitz seminorm along time. We start with  $N^{-d} \ln \left( G_t^N(\eta) \right) $. Since the density $G^N_t$ of the local Gibbs measure is 
\begin{align*}
  G_t^N(\eta) = \frac{\dd \vartheta_{f_t}^N }{\dd \vartheta_{f_\infty}^N}(\eta) = \prod_{x\in\T^d_N} \frac{Z(\sigma(f_\infty))}{Z(\sigma(f_{t,x}))}  \bigg(\frac{\sigma(f_{t,x})}{\sigma(f_\infty)}\bigg)^{\eta_x}
\end{align*} 
we deduce
\begin{equation*}
  \frac{\ln \left( G_t^N (\eta)\right)}{N^d} = \frac{1}{N^d} \sum_{x\in\T^d_N} \left[ \ln \left(  \frac{ Z(\sigma(f_\infty)) }{ Z(\sigma(f_{t.x}))}  \right)   + \eta_x \ln \left( \frac{\sigma(f_{t,x})}{\sigma(f_\infty)}  \right) \right]
\end{equation*}
which satisfies the Lipschitz condition uniformly in time, by using the uniform bounds on $\sigma(f_{t.x})$ and $Z$. We then consider $N^{-d} (\mathcal{L}_N \sqrt{G_t^N})/\sqrt{G_t^N}$. Explicit calculations yield
\begin{align*}
\sqrt{G_t^N} = \prod_{x\in\T^d_N} \sqrt{\left(\frac{\sigma\left( f_{t,x}\right)}{\sigma(f_\infty)} \right)^{\eta_x} \frac{Z(\sigma(f_\infty))}{Z(\sigma(f_t(x/N)))}}
\end{align*} 
so that 
\begin{align*}
  \mathcal{L}_N \sqrt{G_t^N} (\eta) = N^2 \sum_{x \in \T^d_N} \sum_{|e|=1}    g(\eta_x) \bigg[ \sqrt{ \frac{ \sigma\big(f_t\big(\frac{x+e}{N}\big)\big) }{       \sigma\big(f_t\big(\frac{x}{N}\big)\big) } } - 1 \bigg] \sqrt{G_t^N}  (\eta)
\end{align*}
and hence 
\begin{equation*}
  \frac{\mathcal{L}_N \sqrt{G_t^N}(\eta) }{N^d \sqrt{G_t^N}(\eta)}
  = \frac{1}{N^d} \sum_{x\in\T^d_N}     \frac{g(\eta_x)}{\sqrt{\sigma\left(f_{t,x}\right)}}
    N^2 \sum_{y \sim x} \left[ \sqrt{\sigma\left(f_{t,y}\right)} - \sqrt{\sigma\left(f_{t,x}\right)} \right] = \frac{1}{N^d}  \sum_{x\in\T^d_N}  g(\eta_x) \phi_x 
\end{equation*}
with the following test function at each site
\begin{equation*}
  \phi_x := \frac{1}{\sqrt{\sigma\left(f_{t,x}\right)}} N^2 \sum_{y \sim x} \left[ \sqrt{\sigma\left(f_{t,y}\right)} - \sqrt{\sigma\left(f_{t,x}\right)} \right]
\end{equation*}
which is bounded in $C^1$ thanks to the regularity of $\sigma$ and $f$ (note that the last sum is a discrete Laplacian). We have therefore proved that $\mathsf R_2(t)$ is uniformly bounded in time.

\subsection{Proof of \textbf{(H4)} for the ZRP}

We now prove~\eqref{eq:consist-ent} in~\textbf{(H4)}. Let us compute
\begin{align*}
  H^N\left( \vartheta_{f_t}^N| \vartheta^N _{f_\infty} \right)
  & = \frac{1}{N^d} \sum_{x \in \T^d_N} \int_{\sX_N} \left[ \eta_x \ln \left( \frac{\sigma\left( f_{t,x}\right)}{\sigma(f_\infty)} \right) + \ln \left( \frac{Z(\sigma(f_\infty))}{Z\left(\sigma\left(f_{t,x} \right)\right)} \right) \right] \dd \vartheta_{f_t} ^N(\eta)
  \\
  & = \frac{1}{N^d} \sum_{x \in \T^d_N} \left[f_{t,x}  \ln \left( \frac{\sigma\left( f_{t,x}\right)}{\sigma(f_\infty)}
    \right) + \ln \left( \frac{Z(\sigma(f_\infty))}{Z\left(\sigma\left( f_{t,x}\right)\right)} \right) \right] \\
  & = \int_{\T^d} \left( f_t \ln \frac{\sigma(f_t)}{\sigma(f_\infty)}
    + \ln \frac{Z(\sigma(f_\infty))}{Z(\sigma(f_t))} \right) \dd x
    + \cO\left(\frac{1}{N}\right) 
\end{align*}
by using the periodicity and $C^{3}$ regularity of $f$,
and
\begin{align*}
  & \frac{4}{N^d} \int_0 ^t \int_{\sX_N}
  \sqrt{G^N_\tau}  \cL_N \left( \sqrt{G^N_\tau} \right) \dd
  \vartheta^N _{f_\infty} \dd \tau \\
  & = \frac{4 N^2}{N^d} \int_0 ^t \int_{\sX_N}\sum_{x,y \in \T^d_N} p(y-x) g(\eta_x) \left[\sqrt{\frac{\sigma\left( f_{t,y}\right)}{\sigma\left( f_{t,x}\right)}} - 1 \right] \dd \vartheta^N _{f_t} \dd \tau \\
  & = \frac{4 N^2}{N^d} \sum_{x,y \in \T^d_N} \int_0 ^t  p(y-x) \left[ \sqrt{\sigma\left( f_{t,y}\right)} - \sqrt{\sigma\left(   f_{t,x}\right)} \right] \sqrt{\sigma\left(   f_{t,x}\right)} \dd \tau
\end{align*}
which can be approximated by higher-order Taylor expansion and employing the decay of $f_t$. 
We then deduce 
\begin{align*}
  & \frac{4}{N^d} \int_0 ^t \int_{\sX_N}
    \sqrt{G^N_\tau}  \cL_N \left( \sqrt{G^N_\tau} \right) \dd
    \vartheta^N _{f_\infty} \dd \tau \\
  & =  \frac{4}{N^{d}} \sum_{x \in \T^d_N} \int_0 ^t
    \underbrace{\left( \sum_{y \in
    \T^d_N} p(y-x) (y-x) \right)}_{=0} \cdot \left(
    \sqrt{\sigma(f_t)} \nabla \sqrt{\sigma(f_t)} \right)
    \left( \frac{x}{N}\right)  \dd \tau \\
  & \quad + \frac{4}{N^{d}}
    \sum_{x \in \T^d_N} \int_0 ^t
    \underbrace{\left( \sum_{y \in
    \T^d_N} \frac12 p(y-x) (y-x)^{\otimes 2} \right)}_{=A} : \left(
    \sqrt{\sigma(f_t)} \nabla^2 \sqrt{\sigma(f_t)} \right)
    \left( \frac{x}{N}\right)  \dd \tau + \cO
    \left(\frac{1}{N} \right) \\
  & = - \int_0 ^t \int_{\T^d} \frac{A : \nabla \sigma(f_t)
    \otimes \nabla \sigma(f_t)}{\sigma(f_t)} \dd x \dd \tau + \cO
    \left(\frac{1}{N} \right)
\end{align*}
which proves ~\eqref{eq:consist-ent} with $\epsilon_{NL}(N) =
\cO(N^{-1})$. Note also that 
\begin{align*}
  \dt \int_{\T^d} \left( f_t \ln \frac{\sigma(f_t)}{\sigma(f_\infty)}
  + \ln \frac{Z(\sigma(f_\infty))}{Z(\sigma(f_t))} \right) \dd x
  = - \int_{\T^d} \frac{A : \nabla \sigma(f_t)
    \otimes \nabla \sigma(f_t)}{\sigma(f_t)} \dd x
\end{align*}
where we used the identity $Z'(\sigma(f_t))/Z(\sigma(f_t)) = f_t/\sigma(f_t)$.
\appendix

\section{Quantitative local limit estimates}

The content of this appendix closely follows~\cite{MR388499, KL99}. It is included for the reader's convenience and the sake of completeness. We provide a quantitative proof of the equivalence of ensembles for the ZRP (the argument applies to more general classes of invariant measures). The proof is based on the local limit theorem in~\cite{MR388499} and~\cite[Appendix 2]{KL99}. We consider our $d$-dimensional lattice $\cC^\ell_x$ with length $\ell$ and our product measure $\vartheta^\ell_\rho$ with one-site mass $\rho$, i.e. $\vartheta^\ell_\rho[\eta_{y_0}] = \rho$ for $y_0 \in \cC_x^\ell$. 
We denote by $\vartheta^{\ell,m}$ this invariant measure conditioned to the hyperplan $\langle \eta \rangle_{\cC_x ^\ell} = m$. Importantly one can check (by cancelling terms in the following fraction) that $\vartheta^{\ell,m}$ is independent of $\rho>0$:
\begin{equation*}
  \int_{\cC^\ell_x} \varphi(\eta^{(\ell)}) \dd \vartheta^{\ell,m}(\eta^{(\ell)})
   = \frac{\ds \int_{\langle \eta^{(\ell)} \rangle_{\cC^\ell_x} = m} \varphi(\eta^{(\ell)}) \dd \vartheta^\ell_\rho(\eta^{(\ell)})}{\ds \int_{\langle \eta^{(\ell)} \rangle_{\cC^\ell_x} = m} \dd \vartheta^\ell_\rho(\eta^{(\ell)})} 
   = \frac{\ds \sum_{\langle \eta^{(\ell)} \rangle_{\cC^\ell_x} = m} \varphi(\eta^{(\ell)}) \prod_{y \in \cC_x^\ell} \frac{1}{g(\eta_y)!}}{\ds \sum_{\langle \eta^{(\ell)} \rangle_{\cC^\ell_x} = m} \prod_{y \in \cC_x^\ell} \frac{1}{g(\eta_y)!}}.
\end{equation*}

\begin{theorem}
  \label{theo:llt}
  Given $0<m_0<m_1<\infty$ and $g(\eta_{y_0})$ a function depending only on $\eta$ at one site $y_0 \in \cC_x ^\ell$, there exists a constant $C_E=C_E(m_0,m_1)$ such that for any $m \in [m_0,m_1]$
  \begin{equation*}
    \left|  \mathbb{E}_{\vartheta^{\ell,m}}[g(\eta_{y_0})] - \mathbb{E}_{\vartheta^\ell_m}[g(\eta_{y_0})] \right| \leq C_E \ell^{-d+0}.
  \end{equation*}
\end{theorem}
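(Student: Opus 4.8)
The plan is to reduce the equivalence of ensembles to a ratio of lattice-point probabilities for a sum of i.i.d.\ variables, and then to feed in a local limit theorem with an Edgeworth correction. Set $n := (2\ell+1)^d = |\cC_x^\ell|$ and enumerate $\cC_x^\ell = \{y_0,y_1,\dots,y_{n-1}\}$. Under $\vartheta^\ell_m$ the occupation numbers $(\eta_{y})_{y\in\cC_x^\ell}$ are i.i.d.\ with law $n_{\sigma(m)}$; since $Z$ has infinite radius of convergence these variables have finite exponential moments of every order, their common mean is $m$, and $\mathbb{E}_{\vartheta^\ell_m}[g(\eta_{y_0})] = \sigma(m)$ by the defining relation for $\sigma$. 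Write $\chi(m) := \operatorname{Var}_{n_{\sigma(m)}}(\eta_0)$, which is continuous and strictly positive, hence bounded above and below on $[m_0,m_1]$; likewise all cumulants of $\eta_0$ are uniformly bounded there. Let $S_j := \eta_{y_1}+\dots+\eta_{y_j}$, let $\nu_j$ be its law, and set $J := mn \in \Z_{\ge 0}$ (recall $m$ ranges over $\tfrac1n\Z_{\ge 0}$). Conditioning on the value at $y_0$ gives
\[
  \mathbb{E}_{\vartheta^{\ell,m}}\big[g(\eta_{y_0})\big] - \sigma(m)
  = \sum_{k \ge 0} g(k)\, n_{\sigma(m)}(k) \left( \frac{\nu_{n-1}(J-k)}{\nu_n(J)} - 1 \right),
\]
using that $\sum_k n_{\sigma(m)}(k)\,\nu_{n-1}(J-k)/\nu_n(J)=1$.

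First I would record the needed local limit theorem. Since $g(k)>0$ for $k\ge 1$, the law $n_{\sigma(m)}$ is supported on all of $\N$, hence aperiodic with lattice span $1$; together with the exponential moments this yields (following \cite{MR388499} and \cite[Appendix~2]{KL99}), for every fixed $A>0$ and uniformly for $m\in[m_0,m_1]$ and $|r|\le n^{1/4}$,
\[
  \nu_{n-1}\big((n-1)m + r\big)
  = \frac{e^{-r^2/(2(n-1)\chi(m))}}{\sqrt{2\pi(n-1)\chi(m)}}
    \left( 1 + \frac{\lambda_3(m)}{6\sqrt{n-1}}\,H_3\!\Big(\tfrac{r}{\sqrt{(n-1)\chi(m)}}\Big) + \cO\big(\tfrac1n\big) \right) + \cO\big(n^{-A}\big),
\]
with $\lambda_3(m)$ the standardized third cumulant (bounded on $[m_0,m_1]$) and $H_3$ the third Hermite polynomial; applying this at $r=0$ and using $H_3(0)=0$ also gives $\nu_n(J) = (2\pi n\chi(m))^{-1/2}(1+\cO(\tfrac1n)) + \cO(n^{-A})$. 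Since $J-k-(n-1)m = m-k$, taking $r=m-k$ and expanding the Gaussian factor, the square-root prefactor, and the bracketed corrections shows that for $k\le K := \lceil C\log n\rceil$ (with $C$ to be chosen),
\[
  \frac{\nu_{n-1}(J-k)}{\nu_n(J)} = 1 + \cO\!\Big(\frac{1+k^2}{n}\Big)
\]
uniformly in $m\in[m_0,m_1]$; the key point is that the $\cO(n^{-1/2})$ Edgeworth term cancels at the mean, so in the bulk regime $|m-k|\lesssim\log n$ the error is genuinely of order $n^{-1}$. I would also note the crude global bound $\nu_{n-1}(J-k)/\nu_n(J) \le 1/\nu_n(J) = \cO(\sqrt n)$.

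Then I would split the sum at $K$ and conclude. For $k\le K$: since $g$ is Lipschitz, $g(k)\lesssim 1+k$, and $n_{\sigma(m)}$ has all moments bounded uniformly on $[m_0,m_1]$, so $\sum_{k\ge 0} g(k)\,n_{\sigma(m)}(k)(1+k^2) \le C(m_0,m_1) < \infty$, whence $\sum_{k\le K} g(k)\,n_{\sigma(m)}(k)\,\cO((1+k^2)/n) = \cO(1/n)$. For $k>K$: the exponential moment bound gives $\sum_{k>K} g(k)\,n_{\sigma(m)}(k) \lesssim e^{-cK} = n^{-cC}$ uniformly in $m$, and multiplying by $\nu_{n-1}(J-k)/\nu_n(J)+1 \lesssim \sqrt n$ leaves $\cO(n^{-cC+1/2})$, which is $\cO(1/n)$ once $C$ is large. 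Combining the two pieces yields $|\mathbb{E}_{\vartheta^{\ell,m}}[g(\eta_{y_0})] - \sigma(m)| = \cO(n^{-1}) = \cO(\ell^{-d})$ uniformly in $m\in[m_0,m_1]$, which is stronger than the claimed $C_E\,\ell^{-d+0}$ (the $\varepsilon_0$-room being a convenient margin that absorbs, e.g., logarithmic factors one may prefer not to track).

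The main obstacle is the local limit theorem itself with the two features needed simultaneously: an Edgeworth correction precise enough to expose the $n^{-1}$ rate (a plain LLT would give only $n^{-1/2}$), and uniformity of all error constants in the density $m$ across the compact interval $[m_0,m_1]$ — the latter following from the continuity in $m$ of $\sigma$, $\chi$ and the higher cumulants together with the lower bound on $\chi$. Everything else — the conditioning identity, the Taylor expansions, the exponential-tail cut-off — is routine bookkeeping made possible by the infinite radius of convergence of $Z$ and the aperiodicity of $n_{\sigma(m)}$.
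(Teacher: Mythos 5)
Your proposal is correct and follows essentially the same route as the paper's proof: both reduce the equivalence of ensembles, via conditioning on $\eta_{y_0}$, to a ratio of lattice-point probabilities for sums of i.i.d.\ occupation variables and then invoke a local limit expansion precise enough to expose the $\cO(\ell^{-d})$ relative error near the mean. The only real difference is that the paper proves the required expansion from scratch by Fourier inversion of the characteristic function (splitting the $t$-integral into three regimes, with the uniform-in-$m$ bound $|\hat n_m(t)|\le 1-\beta$ away from $t=0$ handling aperiodicity), whereas you cite the Edgeworth-corrected local limit theorem from the same references; your conditioning identity, the cut-off at $k\sim\log n$ with the crude $\cO(\sqrt n)$ bound on the ratio, and the uniformity discussion are all sound.
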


\begin{proof}[Proof of Theorem~\ref{theo:llt}]
  The proof is based on establishing a Taylor expansion of $m' \mapsto \vartheta^\ell_m(\{\langle \eta^{(\ell)} \rangle_{\cC^\ell_x}=m'\})$ around $m$, and then using it on $\cC_x ^\ell$ and $\cC^\ell_x$ deprived from one point in order to estimate the conditional measure applied to a one-site function. The Taylor expansion reads, for $m'=m-k \ell^{-d}$, $k \in \{0,1,\dots,\ell^d\}$ (the fluctuations are of the order of the number of particles at one site):
  \begin{equation}
    \label{eq:taylor-ensembles}
    \vartheta^\ell_m\left(\left\{\langle \eta^{(\ell)} \rangle_{\cC^\ell_x}=m - k \ell^{-d} \right\}\right) = \frac{e^{-\frac{k^2}{c_2(m) \ell^d}}}{\sqrt{4 \pi c_2(m) \ell^d}} \left[ 1 - \cO\left( \ell^{-d+0} \right) \right]
  \end{equation}
  with $c_2(m) := \mathbb E_{n_m}(\eta_0^2)/2$, uniformly for $m \le m_0$ bounded above. Before proving~\eqref{eq:taylor-ensembles}, let us show how it implies the result. By Fubini we have 
  \begin{equation*}
     \mathbb{E}_{\vartheta^{\ell,m}}[g(\eta_{y_0})] = \int_\N g(\eta_{y_0}) \frac{\tilde \vartheta^\ell_m\left(\left\{\langle \tilde \eta^{(\ell)} \rangle_{\cC^\ell_x}=m-\eta_{y_0} \ell^{-d} \right\}\right)}{\vartheta^\ell_m\left(\left\{\langle \eta^{(\ell)} \rangle_{\cC^\ell_x}=m \right\}\right)} \dd n_m(\eta_{y_0})  
   \end{equation*}
   where $\tilde \vartheta^\ell_m$ is the product measure on $\cC^\ell_x \setminus \{y_0\}$ (which has $\ell^d-1$ sites) with local mass parameter $m$ (we have chosen $\rho=m$ when writing the conditional measure). Hence
   \begin{equation*}
     \mathbb{E}_{\vartheta^{\ell,m}}[g(\eta_{y_0})] - \mathbb{E}_{\vartheta^\ell_m}[g(\eta_{y_0})] = \int_\N g(\eta_{y_0}) \left[ \frac{\tilde \vartheta^\ell_m\left(\left\{\langle \tilde \eta^{(\ell)} \rangle_{\cC^\ell_x}=m-\eta_{y_0} \ell^{-d} \right\}\right)}{\vartheta^\ell_m\left(\left\{\langle \eta^{(\ell)} \rangle_{\cC^\ell_x}=m \right\}\right)} -1 \right] \dd n_m(\eta_{y_0}).
   \end{equation*}
   Since the above expansion~\eqref{eq:taylor-ensembles} can be applied in $\cC^\ell_x \setminus \{ y_0 \}$ (the asymptotic is the same since the number of sites is $\ell^d-1 \sim \ell^d$) for the numerator, as well as to the denominator:
   \begin{equation*}
     \frac{\tilde \vartheta^\ell_m\left(\left\{\langle \tilde \eta^{(\ell)} \rangle_{\cC^\ell_x}=m-\eta_{y_0} \ell^{-d} \right\}\right)}{\vartheta^\ell_m\left(\left\{\langle \eta^{(\ell)} \rangle_{\cC^\ell_x}=m \right\}\right)} = e^{-\frac{\eta_{y_0}^2}{c_2(m) \ell^{d/2}}} \frac{\sqrt{\ell^d-1}}{\ell^d} \left[ 1 + \cO \left( \eta_{y_0}^2 \ell^{-d+0} \right) \right] = 1 + \cO\left( \ell^{-d+0} \right)
   \end{equation*}
   thanks to the exponential moments of $n_m(\eta_{y_0})$, which implies the desired estimate.

   We now prove the Taylor expansion. The law $\mu_S$ of the integer-valued random variable $S:=\sum_{y \in \cC^\ell_x} \eta_y$ has $2\pi$-periodic characteristic function $\hat \mu_S(t) = (\hat n_m(t))^{\ell^d}$, where $\hat n_m(t)$ is the $2\pi$-periodic characteristic function of $n_m$, given $t \in \R$. Given $m'=m-k\ell^{-d}$, we apply the periodic inverse Fourier formula:
   \begin{align*}
     \vartheta^\ell_m\left(\left\{\langle \eta^{(\ell)} \rangle_{\cC^\ell_x} =m' \right\}\right)
     & = \mu_S(m'\ell^d) = \frac{1}{2 \pi} \int_{-\pi} ^\pi e^{-it m' \ell^d} \dd \hat \mu_S(t) \\
     & = \frac{1}{2 \pi} \int_{-\pi} ^\pi e^{-it m' \ell^d} \left[\hat n_m(t)\right]^{\ell^d} \dd t \\
     & = \frac{1}{2 \pi \ell^{d/2}} \int_{-\pi \ell^{d/2}} ^{\pi \ell^{d/2}} e^{-it m'\ell^{\frac{d}2}} \left[\hat n_m\left(t\ell^{-\frac{d}2}\right)\right]^{\ell^d} \dd t \\
     & = \frac{1}{2 \pi \ell^{d/2}} \int_{-\pi \ell^{d/2}} ^{\pi \ell^{d/2}} e^{-it m' \ell^{\frac{d}2}} 
       \mathbb E_{n_m}\left[ e^{it \eta_0 \ell^{-\frac{d}2}} \right]^{\ell^d} \dd t. 
   \end{align*}
   and, given $\var>0$ and $\delta \in (0,\pi)$ both small, to be chosen later, we decompose the last $t$-integral into three parts:
   \begin{multline*}
     \vartheta^\ell_m\left(\left\{\langle \eta^{(\ell)} \rangle_{\cC^\ell_x} =m' \right\}\right) = \frac{1}{2 \pi \ell^{d/2}} \int_{|t| \le \ell^\var} \left( \dots \right) \dd t + \frac{1}{2 \pi \ell^{d/2}} \int_{\ell^\var <|t| \le \delta \ell^{d/2}} \left( \dots \right) \dd t \\ + \frac{1}{2 \pi \ell^{d/2}} \int_{\delta \ell^{d/2} <|t| \le \pi \ell^{d/2}} \left( \dots \right) \dd t =: I_1(\ell) + I_2(\ell) + I_3(\ell).
   \end{multline*}
   We now estimate each part successively. To estimate the first part, we observe that on the domain $\{|t| \le \ell^\var \}$ with $\var < d/2$, we have, by expanding the exponential and using the exponential moments of $n_m$,
   \begin{align}
     \nonumber
     \mathbb E_{n_m}\left[ e^{it \eta_0 \ell^{-\frac{d}2}} \right]^{\ell^d}
     & = \left[ 1 + it m \ell^{-\frac{d}2} - c_2 t^2 \ell^{-d} - i c_3 t^3 \ell^{-\frac{3d}{2}} + \cO\left(t^4 \ell^{-2d}\right) \right]^{\ell^d} \\
     \label{eq:exp-ensembles}
     & = \exp \left[ it m \ell^{\frac{d}2} - c_2 t^2 - i c_3 t^3 \ell^{-\frac{d}{2}} + \cO\left(t^4 \ell^{-d}\right) \right] 
   \end{align}
   with $c_2$ defined above and $c_3 := \mathbb E_{n_m}(\eta_0^3)/6$, so that
   \begin{align*}
     I_1(\ell)
     & = \frac{1}{2 \pi \ell^{d/2}} \int_{|t| \le \ell^\var} \exp \left[ it (m-m') \ell^{\frac{d}2} - c_2 t^2 - i c_3 t^3 \ell^{-\frac{d}{2}} + \cO\left(t^4 \ell^{-d}\right) \right] \dd t \\
     & = \frac{1}{2 \pi \ell^{d/2}} \int_{|t| \le \ell^\var} \exp \left[ it k \ell^{-\frac{d}2} - c_2 t^2 - i c_3 t^3 \ell^{-\frac{d}{2}} + \cO\left(t^4 \ell^{-d}\right) \right] \dd t \\
     & = \frac{e^{-\frac{k^2}{c_2\ell^d}}}{2 \pi \ell^{d/2}} \int_{|t| \le \ell^\var} \exp \left[ - c_2 \left(t - ik \ell^{-d/2} c_2^{-1} \right)^2 - i c_3 t^3 \ell^{-\frac{d}{2}} + \cO\left(t^4 \ell^{-d}\right) \right] \dd t
   \end{align*}
   and finally using $\exp[\cO(t^4 \ell^{-d})] = 1 + \cO( \ell^{4\var -d})$ when $|t| \le \ell^\var$ and, by complex contour deformation and estimating the vertical parts of the contour
   \begin{multline*}
     \int_{|t| \le \ell^\var} \exp \left[ - c_2 \left(t - ik \ell^{-d/2} c_2^{-1} \right)^2 - i c_3 t^3 \ell^{-\frac{d}{2}} \right] \dd t \\
     = \int_\R \exp \left[ - c_2 t^2 - i c_3 \left( t + i k \ell^{-d/2} c_2^{-1} \right)^3 \ell^{-\frac{d}{2}} \right] \dd t + \cO \left( e^{-(c_2/2) \ell^\var} \right)
   \end{multline*}
   and since $\Re \exp[ - i c_3 \left( t + i k \ell^{-d/2} c_2^{-1} \right)^3 \ell^{-\frac{d}{2}}] = 1 + \cO(\ell^{2\var-d})$, we deduce by taking the real part (the loss on the power of $\ell$ could easily be reduced to a logarithm)
   \begin{equation*}
     I_1(\ell) = \frac{e^{-\frac{k^2}{c_2\ell^d}}}{2 \pi \ell^{d/2}} \left[ \int_\R e^{-c_2 t^2} \dd t + \cO( \ell^{3\var -d}) \right] = \frac{e^{-\frac{k^2}{c_2\ell^d}}}{\sqrt{4 \pi c_2 \ell^d}} \left[ 1 + \cO( \ell^{3\var -d}) \right].
   \end{equation*}
   To control $I_2(\ell)$ we use again the expansion~\eqref{eq:exp-ensembles} and take advantage of the fact that $t$ is away from zero but also less than $\delta \ell^{d/2}$ which implies that $t^4 \ell^d \le \delta^2 t^2$ to get
   \begin{align*}
     \left| I_2(\ell) \right| 
     & = \frac{1}{2 \pi \ell^{d/2}} \left| \int_{\ell^\var \le |t| \le \delta \ell^{d/2}} \exp \left[ it k \ell^{-\frac{d}2} - c_2 t^2 - i c_3 t^3 \ell^{-\frac{d}{2}} + \cO\left(t^4 \ell^{-d}\right) \right] \dd t \right| \\
     & \le \frac{1}{2 \pi \ell^{d/2}} \int_{\ell^\var \le |t| \le \delta \ell^{d/2}} \exp \left[ - c_2 t^2 + \cO\left(\delta t^2 \right) \right] \dd t = \cO \left( e^{-(c_2/2) \ell^\var} \right)
   \end{align*}
   for $\delta>0$ small enough. Finally to control $I_3(\ell)$ we will use the concentration of measure of $\hat n_m(t)$. We write (undoing the previous change of variable in $t$)
   \begin{equation}
     \label{eq:control-I3}
     \left| I_3(\ell) \right| 
     = \frac{1}{2 \pi} \left| \int_{\delta <|t| \le \pi} e^{-it m' \ell^d} \hat \mu_S(t) \dd t \right| \le  \frac{1}{2 \pi} \int_{\delta <|t| \le \pi} \left| \hat n_m(t) \right|^{\ell^d} \dd t.
   \end{equation}
   Then we calculate (denoting $p_m ^j = \sigma(m)^j/(g(j)!Z(\sigma(m))$ the coefficients of the measure $n_m$ on $\N$)
   \begin{align*}
     \left| \hat n_m(t) \right|^2 = \left| \sum_{j \ge 0} e^{itj} p_m ^j \right|^2 = \sum_{j_1,j_2 \ge 0} e^{it(j_1-j_2)} p_m ^{j_1} p_m^{j_2} = \sum_{j \ge 0} \left( \frac{e^{it j} + e^{-itj}}{2} \right) \tilde p_m ^j = \sum_{j \ge 0} \cos(tj) \tilde p_m ^j 
   \end{align*}
   with the notation $\tilde p_m ^j := \sum_{j_1,j_2\ge 0} p_m ^{j_1} p_m ^{j_2} \ge 0$ (we have symmetrised the sums). Since we have $\sum_{j \ge 0} \tilde p_m ^j = (\sum_{j \ge 1} p_m ^j)^2=1$, we deduce
   \begin{align*}
     \left| \hat n_m(t) \right|^2 -1 = \sum_{j \ge 0} \left[ \cos(tj) -1 \right] \tilde p_m ^j \le - \beta <0
   \end{align*}
   when $|t| \in [\delta,\pi]$, since $p_m^1$ bounded below away from zero uniformly in $m \in [m_0,m_1]$, for some $\beta \in (0,1)$ depending on $\delta$, $m_0$ and $m_1$. Therefore~\eqref{eq:control-I3} implies
   \begin{equation*}
     \left| I_3(\ell) \right| 
     \le  \frac{1}{2 \pi} \int_{\delta <|t| \le \pi} \left| \hat n_m(t) \right|^{\ell^d} \dd t \le \frac{1}{2} (1-\beta)^{\frac{\ell^d}{2}}
   \end{equation*}
   and the $I_3$ term decays exponentially fast in $\ell$. 
\end{proof}

\bibliographystyle{alpha}
\bibliography{bibliography}

\end{document}